\newtheorem{theorem}{Theorem}[section]
\newtheorem{algo}{Algorithm}[section]
\newtheorem{lemma}[theorem]{Lemma}
\newtheorem{defn}[theorem]{Definition}
\newtheorem{definition}[theorem]{Definition}
\newtheorem{example}[theorem]{Example}
\newcommand{\ttt}{\texttt}
\newcommand{\RR}{\mathbb{R}}
\newcommand{\CC}{\mathbb{C}}
\newcommand{\ZZ}{\mathbb{Z}}
\newcommand{\QQ}{\mathbb{Q}}
\newcommand{\NN}{\mathbb{N}}
\newcommand{\PP}{\mathbb{P}}
\newcommand{\FF}{\mathbb{F}}
\newcommand{\HH}{\mathbb{H}}
\newcommand{\la}{\langle}
\newcommand{\ra}{\rangle}
\newcommand{\bsm}{\left ( \begin{smallmatrix} }
\newcommand{\esm}{\end{smallmatrix} \right )}
\newcommand{\bma}{\begin{pmatrix}}
\newcommand{\ema}{\end{pmatrix}}
\newcommand{\beq}{\begin{equation*}}
\newcommand{\eeq}{\end{equation*}}
\newcommand{\Ac}{\mathcal{A}}
\newcommand{\bc}{\mathcal{B}}
\newcommand{\Cc}{\mathcal{C}}
\newcommand{\cc}{\mathcal{C}}
\newcommand{\dc}{\mathcal{D}}
\newcommand{\ec}{\mathcal{E}}
\newcommand{\fc}{\mathcal{F}}
\newcommand{\gc}{\mathcal{G}}
\newcommand{\hc}{\mathcal{H}}
\newcommand{\ic}{\mathcal{I}}
\newcommand{\jc}{\mathcal{J}}
\newcommand{\lc}{\mathcal{L}}
\newcommand{\nc}{\mathcal{N}}
\newcommand{\pc}{\mathcal{P}}
\newcommand{\uc}{\mathcal{U}}
\newcommand{\Sc}{\mathcal{S}}
\newcommand{\opn}{\operatorname}
\newcommand{\op}{\oplus}
\newcommand{\bi}{\begin{itemize}}
\newcommand{\ei}{\end{itemize}}
\newcommand{\bpm}{\begin{pmatrix}}
\newcommand{\epm}{\end{pmatrix}}
\newcommand{\ol}{\opn{O}^+(L)}
\newcommand{\old}{\opn{O}^+(L')}
\newcommand{\otl}{\widetilde{\opn{O}}^+(L)}
\title{Orbits in lattices}
\author{Matthew Dawes}
\date{}
\begin{document}
\maketitle
\begin{abstract}
  Let $\opn{O}(L)$ be the orthogonal group of a lattice $L$.
  We exhibit algorithms for calculating Tits' buildings and orbits of vectors in $L$ for certain subgroups of $\opn{O}(L)$.
  We discuss how these algorithms can be applied
  to understand the configuration of boundary components in the Baily-Borel compactification of orthogonal modular varieties
  and to improve the performance of computer arithmetic of orthogonal modular forms.
\end{abstract}
\section{Introduction}
We exhibit algorithms for calculating Tits' buildings and orbits of vectors in a lattice $L$ for certain subgroups of the orthogonal group $\opn{O}(L)$.
Our motivations are twofold.
In one direction, we show how our results can be applied to understand the configuration of boundary components in the Baily-Borel compactification of orthogonal modular varieties.
Such questions are natural problems in algebraic geometry \cite{DawesFamily, abeliancompact, Scattone, Sterk}, but are typically addressed via ad-hoc methods.
We believe our results are the first broadly applicable results for orthogonal modular varieties.
In another direction, we show how our results can be applied to improve the performance of computer arithmetic of orthogonal modular forms, which can be useful in a variety of situations (e.g. calculating Fourier expansions of Borcherds products \cite{GKRBorcherdsproducts}).
While theoretical frameworks for fast multiplication have been discussed in the literature (e.g. \cite{RaumImplement}), practical implementations are only known for  certain special cases (e.g. for Hermitian modular modular forms using ideas based on Minkowski reduction \cite{DernThesis}) which do not generalise to the orthogonal group.

Our main results are Algorithms \ref{non_iso_def_alg}, \ref{non_iso_indef_alg0} and \ref{non_iso_indef_alg} for finding orbits of vectors in $L$ for certain subgroups of $\opn{O}(L)$ and Algorithms \ref{MaximalAlgo} and \ref{SuperGroupAlg} for calculating Tits' building of subgroups of $\opn{O}(L)$ when $L$ is of signature $(2,n)$.
We have produced software \cite{boundary} for calculating certain Tits' buildings using these algorithms: example calculations can be found in \S\ref{ExampleSection}.  
While our results are mostly intended for use with computer algebra packages such as \cite{GAP, SAGE, MAGMA}, we have interspersed a number of examples to show the algorithms can sometimes be used for manual calculation.
\subsection{Lattices}
Unless otherwise stated, a \emph{lattice} $L$ is an even, non-degenerate integral quadratic form on a free abelian group of finite rank \cite{Nikulin, SPLAG}.
We denote the bilinear form of $L$ by $(-,-)$ and, for a given $\ZZ$-basis $\{x_i \}$ of $L$, we let $G(L):=((x_i, x_j))_{i, j}$ denote the associated Gram matrix.
Examples of lattices include root lattices such as $A_n$ (we will assume all roots lattices are negative definite);
the rank 1 lattice $\la d \ra$ generated by a single element $x$ of squared length $x^2 := (x,x)=d$;
and the hyperbolic plane $U$, whose Gram matrix is given by
\begin{equation}\label{ugram}
G(U) = \bpm 0 & 1 \\ 1 & 0 \epm
\end{equation}
for a suitable basis (a \emph{canonical basis} for $U$).
We use $L_1 \op L_2$ to denote the orthogonal direct sum of lattices $L_1$ and $L_2$,
$mL$ to denote the orthogonal direct sum of $m$ copies of $L$
and $L(m)$ to denote the lattice obtained by multiplying the quadratic form of $L$ by $m$.
The \emph{signature} of $L$ is the pair $(t_+, t_-)$ consisting of the number of positive $t_+$ and negative $t_-$ squares in a diagonalisation of $G(L)$.
A lattice $L$ is  
\emph{(positive) definite} if $t_-=0$,
\emph{(negative) definite} if $t_+ = 0$
and \emph{indefinite} otherwise;   
$L$ is \emph{isotropic} if there exists $0 \neq x \in L$ such that $x^2=0$;  
a sublattice $S \subset L$ is \emph{totally isotropic} if the quadratic form of $L$ restricts to zero on $S$;
and a vector $0 \neq x \in L$ is \emph{isotropic} if $x^2 = 0$.
A sublattice $S \subset L$ is \emph{primitive} if $L/S$ is torsion-free and a vector $0 \neq x \in L$ is \emph{primitive} if  $\la x \ra \subset L$ is primitive.
The \emph{dual lattice $L^{\vee}$} of $L$ is the lattice defined on the group $\opn{Hom}(L, \ZZ) \subset L \otimes \QQ$ by the quadratic form of $L \otimes \QQ$.
For $x \in L$, we let $\hat{x}:={}^{\tau}xG(L)$. 
If $0 \neq x \in L$, we let  $\opn{div}(x)$ (the \emph{divisor} of $x$) denote the positive generator of the ideal $(x, L)$ and define $x^*:=x/\opn{div}(x) \in L^{\vee}$.
We will use $\opn{Iso}(L_1, L_2)$ to denote the set of isomorphisms between lattices $L_1$ and $L_2$ compatible with the quadratic forms of $L_1$ and $L_2$.
A lattice $L'$ is said to belong to the same \emph{genus} as $L$ if the quadratic forms $L' \otimes \ZZ_p \cong L \otimes \ZZ_p$ for all primes $p$ and $L' \otimes \RR \cong L \otimes \RR$.
If $\opn{Iso}(L, L')$ is non-trivial then $L$ and $L'$ are said to belong to the same \emph{class}.
We let $\opn{gen}(L)$ denote the set of classes in the same genus as $L$.
The (real) \emph{Witt index} of $L$ is defined as the maximal dimension of a totally isotropic subspace of $L \otimes \RR$ and, for prime $p$, the \emph{$p$-rank} $\opn{rank}_p(L)$ of $L$ is the maximal rank of all sublattices $S \subset L$ such that $\opn{det}(S)$ is coprime to $p$.
\subsection{Discriminant forms}
The \emph{discriminant group} of a lattice $L$ is defined as the abelian group $D(L):=L^{\vee}/L$ \cite{Nikulin}.
There is a natural $\QQ/2\ZZ$-valued finite quadratic form $q_L$ on $D(L)$ inherited from $L$ (the \emph{discriminant form} of $L$) \cite{Nikulin}.
It is known that the genus of $L$ is uniquely determined by the data $(t_+, t_-, q_L)$ \cite{Nikulin}.
For lattices $L_1$ and $L_2$, we let $\opn{Iso}(D(L_1), D(L_2))$ denote the set of group isomorphisms from $D(L_1)$ to $D(L_2)$, we define 
\beq
\opn{Iso}(q_{L_1}, q_{L_2}) := \{ g \in \opn{Iso}(D(L_1), D(L_2)) \mid 
q_{L_2} \circ g \equiv q_{L_1} \bmod{2 \ZZ} \}
\eeq
and use $\opn{O}(D(L))$ to denote $\opn{Iso}(q_L, q_L)$.
We use $[d_1, \ldots, d_{\opn{min}(m,n)}]_{m,n}$ to denote the $m \times n$ matrix whose $(i,i)$-th entry is $d_i$ and all other entries are zero.
We use $(y_1 \vert \ldots \vert y_m)$ to denote the matrix whose $i$-th column is given by $y_i$.
For $A \in M_{m,n}(\ZZ)$, there exist $P(A) \in M_{m,m}(\ZZ)$ and $Q(A) \in M_{n,n}(\ZZ)$ such that
\begin{equation}\label{discsmith}
P(A) A Q(A) = [d_1, \ldots, d_{\opn{min}(m,n)}]_{m,n}
\end{equation}
(the \emph{Smith normal form} of $A$) where, for some $r \in \NN$,  $d_i \neq 0$ for $i \leq r$, $d_i=0$ for $i>r$ and $d_i \vert d_{i+1}$ for $i=1, \ldots, r-1$ \cite{morris}.
For a fixed basis of $L$, the Gram matrix $G(L)$ defines an inclusion $G(L):L \hookrightarrow L^{\vee}$, allowing one to calculate the structure of $D(L)$ and a set of generators, using \eqref{discsmith}.
\subsection{The orthogonal group}
Let $\opn{O}(L)$ and $\opn{O}(L \otimes \FF)$ denote the orthogonal groups of $L$ and $L \otimes \FF$, where $\FF = \QQ$, $\RR$ or $\CC$.
Every $g \in \opn{O}(L \otimes \RR)$ can be written as a product of reflections  
\begin{equation}\label{refdec}
g = \sigma_{w_1} \ldots \sigma_{w_m}
\end{equation}
where $\sigma_w \in \opn{O}(L \otimes \RR)$ is defined by
\beq
\sigma_w: x \mapsto x - \frac{2(x,w)}{(w,w)}x \in \opn{O}(L \otimes \RR)
\eeq
for $w \in L \otimes \RR$ \cite{cassels}.
The \emph{spinor norm} $\opn{sn}_{\RR}(g)$ of $g$ as in \eqref{refdec} is defined by \cite{Kneser}
\begin{equation}\label{spinordef}
  \opn{sn}_{\RR}(g) =
  \left ( \frac{-(w_1, w_1)}{2} \right )
  \ldots
    \left ( \frac{-(w_m, w_m)}{2} \right )
\in \RR/(\RR^*)^2.
\end{equation}
We use $\opn{O}^+(L \otimes \RR)$ to denote the kernel of $\opn{sn}_{\RR}(g)$ in $\opn{O}(L \otimes \RR)$ and, for  $\Gamma \subset \opn{O}(L \otimes \RR)$, we let $\Gamma^+:=\Gamma \cap \opn{O}^+(L \otimes \RR)$.
There is a natural map 
\begin{equation}\label{OODL}
\opn{O}(L) \rightarrow \opn{O}(D(L)).
\end{equation}
We use $\overline{g}$ to denote the image of $g \in \opn{O}(L)$ under \eqref{OODL} and $\widetilde{\opn{O}}(L)$ to denote the kernel of \eqref{OODL} (the \emph{stable orthogonal group}). 
More generally, for $\Gamma \subset \opn{O}(L)$ and $\Ac \subset \opn{O}(D(L))$, we let
$\widetilde{\Gamma} := \Gamma \cap \widetilde{\opn{O}}(L)$
and
$\Gamma_{\Ac} := \{ g \in \Gamma \mid \overline{g} \in \Ac \}$.
\begin{lemma}[{\hspace{1sp}\cite[Lemma 7.1]{handbook}}]\label{stabext}
  If $S \subset L$ is an inclusion of lattices then 
\beq
\widetilde{\opn{O}}(S) \subset \widetilde{\opn{O}}(L)
\eeq
where the extension of $g \in \widetilde{\opn{O}}(S)$ to $\widetilde{\opn{O}}(L)$ is defined by allowing $g$ to act as the identity on $S^{\perp} \subset L$.
\end{lemma}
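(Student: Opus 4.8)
The plan is to realise the extension explicitly on the finite-index sublattice $S \op S^{\perp} \subseteq L$ and to control everything through the induced action on discriminant groups. Since $L$ is non-degenerate, the restriction of $(-,-)$ to $S$ is non-degenerate, so $S \otimes \QQ$ and $S^{\perp} \otimes \QQ$ are complementary orthogonal subspaces of $L \otimes \QQ$; hence $S \op S^{\perp}$ is a finite-index sublattice of $L$ and $L \otimes \QQ = (S \otimes \QQ) \op (S^{\perp} \otimes \QQ)$. Given $g \in \widetilde{\opn{O}}(S)$, extend it to $\tilde{g} := g \op \opn{id}$ on this orthogonal decomposition; this is an isometry of $L \otimes \QQ$ which, being an isometry fixing $S$ and $S^{\perp}$, preserves $S \op S^{\perp}$ as well as its dual $(S \op S^{\perp})^{\vee} = S^{\vee} \op (S^{\perp})^{\vee}$. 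It restricts to $g$ on $S$ and to the identity on $S^{\perp}$, so it only remains to check $\tilde{g}(L) = L$ and $\overline{\tilde{g}} = \opn{id}$.

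The key observation is that the action of $\tilde{g}$ on $D(S \op S^{\perp}) = D(S) \op D(S^{\perp})$ is $\overline{g} \op \opn{id}$, which is the identity because $g$ is stable on $S$. Now the chain $S \op S^{\perp} \subseteq L \subseteq L^{\vee} \subseteq S^{\vee} \op (S^{\perp})^{\vee}$ exhibits both $L/(S \op S^{\perp})$ and $L^{\vee}/(S \op S^{\perp})$ as subgroups of $D(S) \op D(S^{\perp})$, and hence both are fixed pointwise by $\tilde{g}$. Since $L$ is recovered as the preimage in $S^{\vee} \op (S^{\perp})^{\vee}$ of the subgroup $L/(S \op S^{\perp})$, and $\tilde{g}$ fixes that subgroup, we get $\tilde{g}(L) = L$; thus $\tilde{g} \in \opn{O}(L)$ is a well-defined extension of $g$. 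Likewise $\tilde{g}$ fixes $L^{\vee}/(S \op S^{\perp})$ pointwise, hence fixes its quotient $L^{\vee}/L = D(L)$ pointwise, so $\tilde{g} \in \widetilde{\opn{O}}(L)$.

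The one genuinely substantive point — and the step I would present most carefully — is the verification that $g \op \opn{id}$ maps $L$ into itself rather than merely stabilising $S \op S^{\perp}$: this is exactly where the hypothesis $\overline{g} = \opn{id}$ on $D(S)$ is used, via the correspondence between overlattices of $S \op S^{\perp}$ inside its dual and subgroups of $D(S \op S^{\perp})$. For a general $g \in \opn{O}(S)$ the naive extension need not preserve $L$, so the stability assumption cannot be dropped. Everything else — that the quadratic form is respected and that the induced action on $D(L)$ is trivial — is then routine bookkeeping with the inclusions above.
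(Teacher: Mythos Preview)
Your proof is correct. Note, however, that the paper does not supply its own proof of this lemma: it is quoted verbatim from the cited reference \cite[Lemma~7.1]{handbook}, so there is no in-paper argument to compare against directly. That said, your approach is exactly the one implicit in the paper's Lemma~\ref{extlem} (the Nikulin-style correspondence between overlattices and isotropic subgroups of the discriminant group), specialised to the situation $\varphi=g$, $\psi=\opn{id}$; your observation that $\overline{g}\op\opn{id}$ acts trivially on $D(S)\op D(S^{\perp})$ and hence preserves the subgroup $H=L/(S\op S^{\perp})$ is precisely condition~(2) there, and your deduction that $\overline{\tilde g}=\opn{id}$ on $D(L)$ is condition~(3) with $\Ac=\{\opn{id}\}$. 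So while there is nothing to contrast, your write-up fits seamlessly into the paper's framework.
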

\subsection{Modular forms}
For a lattice $L$ of signature $(2,n)$ we let $\Omega_L$ denote the Hermitian symmetric space
\beq
\Omega_L = \{[x] \in \PP(L \otimes \CC) \mid (x,x)=0, (x, \overline{x}) > 0 \},
\eeq
where $\overline{x}$ denotes the complex conjugate of $x$.
We use $\dc_L$ to denote the component of $\Omega_L$ fixed by $\opn{O}^+(L)$ and $\dc_L^{\bullet}$ to denote the affine cone of $\dc_L$.
\begin{definition}[{\hspace{1sp}\cite{handbook}}]
  For a subgroup $\Gamma \subset \opn{O}^+(L)$ of finite index,    a \textbf{modular form of weight $k$ and character} $\chi:\Gamma \rightarrow \CC^*$ for $\Gamma$ is a holomorphic function $F: \dc_L^{\bullet} \rightarrow \CC$ such that
  $F(tZ) = t^{-k} F(Z)$ and
  $F(gZ) = \chi(g) F(Z)$
  for all $Z \in \dc_L^{\bullet}$ and all $t \in \CC^*$.
  We use $M_k(\Gamma, \chi)$ to denote the space of weight $k$ modular forms with character $\chi$ and let $M_*(\Gamma, \chi) := \bigoplus_k M_k(\Gamma, \chi)$.
\end{definition}
\subsection{Fourier expansions}
To define Fourier expansions of modular forms, we begin by exhibiting $\dc_L$ as a tube domain, following \cite{handbook}.
A primitive isotropic vector $c \in L$ identifies $\dc_L$ with the affine quadric
\beq
\dc_{L, c} = \{ Z \in \dc_L^{\bullet} \mid (Z, c) = 1 \} \cong \dc_L.
\eeq
If $b \in L^{\vee}$ satisfies $(c,b)=1$ then the signature $(1, n-1)$ lattice $L_c := c^{\perp}/\la c \ra$ can be realised as a sublattice of $L$ by 
\beq
L_c \cong L_{c,b} = L \cap c^{\perp} \cap b^{\perp} 
\eeq
and we define the cone 
\beq
C(L_c) := \{ x \in L_c \otimes \RR \mid (x,x)>0 \}.
\eeq
The two components of $C(L_c)$ are interchanged by elements of negative spinor norm in $\opn{O}(L_c \otimes \RR)$:  we let $C^+(L_c)$ denote the component preserved by $\opn{O}^+(L_c \otimes \RR)$.
The tube domain $\hc_{L,c}$ is given by  
\beq
\hc_{L,c} = L_c \otimes \RR + i C^+(L_c)
\eeq
and we define the isomorphism $\hc_{L,c} \xrightarrow{\sim} \dc_{L, c} \cong \dc_L$ by
\beq
z \mapsto [z] = z \op \left ( b - \frac{(z,z) + (b,b)}{2}c \right ).
\eeq
If $\overline{C}^+(L_c^{\vee})$ denotes the closure of  $C^+(L_c^{\vee})$ then each $F_j \in M_k(\Gamma, \chi)$ admits a Fourier expansion 
\beq
F_j(Z) = \sum_{l \in \overline{C}^+(L^{\vee})} f_j(l) e^{2 \pi i (l, Z)}
\eeq
where $Z \in \hc_{L, c}$ \cite{handbook}.

One often wishes to calculate the Fourier expansion of $F_1 F_2$ to precision $O(e^{2 \pi i(p, Z)})$ for some $p \in L_c^{\vee}$.
As the cones $\overline{C}^+(L_c^{\vee})$ can be large, term-by-term multiplication can be very expensive.
However, as  
\beq
F_1 F_2 =
 \sum_{l_3 \in \overline{C}^+(L_c^{\vee})} \left (  \sum_{\substack{l_1, l_2 \in \overline{C}^+(L_c^{\vee}) \\ l_1 + l_2 = l_3}} f_1(l_1) f_2(l_2) \right )e^{2 \pi i (l_3, Z)} \in M_*(\Gamma, \chi_1 \otimes \chi_2),
\eeq
then, as explained in \cite{RaumImplement}, one only needs to perform the inner multiplication for representatives of the orbits of $\overline{C}^+(L_c^{\vee})$ for a subgroup
\begin{equation}\label{ThetaDef}
\Theta \subset  \opn{stab}_{\Gamma}(C^+(L_c^{\vee})) \cap \opn{Ker}(\chi_1 \otimes \chi_2).
\end{equation}
One can often take $\Theta$ to be a group of the form $\opn{O}^+_{\Ac}(L_c)$ or $\opn{SO}^+_{\Ac}(L_c)$. 
Indeed, under the mild conditions of Theorem \ref{abelianisationthm} and Lemma \ref{thetalemma}, this is always the case. 
\begin{theorem}[\hspace{1sp}\cite{abelianisation}]\label{abelianisationthm}
  Let $L$ be a lattice.
  \begin{enumerate}
\item  Suppose $L$
  represents -2,
  has Witt index $\geq 2$ over $\RR$,
  $\opn{rank}_3(L) \geq 5$ and
  $\opn{rank}_2(L) \geq 6$
  (the \textbf{Kneser conditions}).
  If $L$ contains a single orbit of $-2$ vectors under $\opn{SO}^+(L)$ then $\opn{Hom}(\opn{SO}^+(L), \CC^*)$ is trivial.
\item If $L$ contains at least two copies of the hyperbolic plane and both $\opn{rank}_2(L) \geq 6$, $\opn{rank}_3(L) \geq 5$ then $\opn{Hom}(\opn{SO}^+(L), \CC^*)$ is trivial.
\end{enumerate}
\end{theorem}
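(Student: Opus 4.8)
Both statements reduce immediately to showing that the abelianisation of $\opn{SO}^+(L)$ is trivial, since every homomorphism $\chi \colon \opn{SO}^+(L) \to \CC^*$ factors through it. The plan is to organise the argument around the normal subgroup $\widetilde{\opn{SO}}^+(L) := \opn{SO}^+(L) \cap \widetilde{\opn{O}}(L)$, whose quotient embeds into the finite group $\opn{O}(D(L))$: first I would kill $\chi$ on $\widetilde{\opn{SO}}^+(L)$ by a perfectness argument, and then control the residual action on $D(L)$.

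For the stable part, note that under either hypothesis $L$ splits off a hyperbolic plane $U = \la e, f \ra$ (immediate in (2); in (1) a consequence of the Witt index and $p$-rank hypotheses via the Hasse principle for lattice embeddings), say $L = U \oplus K$. The Eichler transvections $t(e,a)$, $t(f,a)$ for $a \in K$ lie in $\widetilde{\opn{SO}}^+(L)$ — they have determinant $1$, trivial spinor norm, and act trivially on $D(L)$ — and by a theorem of Eichler and Kneser they generate $\widetilde{\opn{SO}}^+(L)$ once the $p$-rank bounds hold (the rank hypotheses supply the local surjectivity the generation statement needs). One then runs the elementary-subgroup calculus: from $t(e,a)t(e,b) = t(e,a+b)$ together with the commutator relations between transvections attached to $e$ and to $f$ — which, for $a,b$ spanning a hyperbolic plane inside $K \oplus U$, reproduce all transvections and the Weyl and torus elements exactly as in the $\mathrm{SL}_n$ and split-orthogonal cases — one deduces that $\widetilde{\opn{SO}}^+(L)$ is perfect. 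This is precisely where $\opn{rank}_2(L) \ge 6$ and $\opn{rank}_3(L) \ge 5$ are used: they keep the residues modulo $2$ and $3$ away from the non-perfect small groups $\mathrm{SL}_2(\FF_2) \cong S_3$ and $\mathrm{SL}_2(\FF_3)$, which are the obstructions to perfectness in rank-$2$ situations. Hence $\chi$ is trivial on $\widetilde{\opn{SO}}^+(L)$.

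Since $\widetilde{\opn{SO}}^+(L)$ is perfect, $\opn{SO}^+(L)^{\mathrm{ab}}$ is the abelianisation of the image $\bar\Gamma \subseteq \opn{O}(D(L))$, so it remains to kill $\bar\Gamma^{\mathrm{ab}}$. For (2) I would use the \emph{second} hyperbolic plane: an isometry of $D(L)$ realised on $L$ can be realised by one supported on the complement of a single copy of $U$ (via Nikulin-type surjectivity for that complement), and conjugating such a lift through the $\mathrm{SL}_2$-calculus in the other copy of $U$ exhibits it as a commutator, so $\bar\Gamma$ is perfect. For (1) I would instead exploit the single-orbit hypothesis together with the observation that a reflection $\sigma_r$ in a $(-2)$-vector $r$ already lies in $\widetilde{\opn{O}}^+(L)$, so $\sigma_r\sigma_{r'} \in \widetilde{\opn{SO}}^+(L)$: transitivity forces $\chi(\sigma_r\sigma_{r'})$ to depend only on the $\opn{SO}^+(L)$-orbit of the pair $(r,r')$, and since $\sigma_r\sigma_{r'}$ has order $2$ when $r \perp r'$ and order $3$ when $r,r'$ span an $A_2$ — both configurations occurring in the orthogonal complement of a suitable $(-2)$-vector under the rank hypotheses — the identity $(\sigma_{r_1}\sigma_{r_2})(\sigma_{r_2}\sigma_{r_3}) = \sigma_{r_1}\sigma_{r_3}$ forces every such value into $\{\pm 1\} \cap \mu_3 = \{1\}$; chaining these relations across the single orbit trivialises $\chi$ on the even reflection subgroup, which, with the Kneser conditions, suffices alongside the stable part.

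The step I expect to be the genuine obstacle is the passage through $\opn{O}(D(L))$. One has to pin down exactly which isometries of the discriminant form are realised by $\opn{SO}^+(L)$ — rather than by the larger $\opn{O}(L)$, whose image is governed by Nikulin's surjectivity criteria — and show that these lift to commutators; doing this while simultaneously tracking the determinant, the real spinor norm, and the action on $D(L)$, and uniformly under only the stated numerical hypotheses, is where the bookkeeping is delicate. By contrast Stage 1 should be essentially routine given the cited generation theorem, and the role of the single-orbit hypothesis in (1) is precisely to supply the control over $\bar\Gamma$ that the second hyperbolic plane provides for free in (2).
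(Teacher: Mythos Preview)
The paper does not prove this theorem at all: it is quoted verbatim from the cited reference \cite{abelianisation} and used as a black box, so there is no proof here to compare your proposal against.

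For what it is worth, your outline is broadly in the spirit of how the result is established in that reference --- one first shows the Eichler/stable subgroup $\widetilde{\opn{SO}}^+(L)$ is generated by transvections (this is Kneser's theorem, and is exactly where the $p$-rank hypotheses enter) and is perfect, and then controls the finite quotient sitting inside $\opn{O}(D(L))$. Your identification of the discriminant-form step as the delicate one is accurate; the cited paper handles it with a more explicit case analysis than the commutator-lifting sketch you give, and in particular the single-orbit hypothesis in part (1) is used there rather differently from your $\mu_2 \cap \mu_3$ argument. But none of this is in the present paper, which simply imports the statement.
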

\begin{lemma}\label{thetalemma}
  Let $\Gamma \subset \opn{O}(L)$ where $L$ is a lattice of signature $(2,n)$. 
  If $\widetilde{\opn{SO}}^+(L) \subset \Gamma$ and $\opn{Hom}(\opn{SO}^+(L), \CC^*)$ is trivial then one can take $\Theta=\widetilde{\opn{SO}}^+(L_c)$ in \eqref{ThetaDef}. 
\end{lemma}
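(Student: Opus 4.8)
The plan is to check that $\Theta := \widetilde{\opn{SO}}^+(L_c)$, once transported into $\opn{O}(L)$ through the identification $L_c \cong L_{c,b} = L \cap c^{\perp} \cap b^{\perp}$, meets the three requirements packaged into \eqref{ThetaDef}: (a) $\widetilde{\opn{SO}}^+(L_c) \subset \Gamma$; (b) $\widetilde{\opn{SO}}^+(L_c)$ stabilises $C^+(L_c^{\vee})$; and (c) $\widetilde{\opn{SO}}^+(L_c) \subset \opn{Ker}(\chi_1 \otimes \chi_2)$. Once (a)--(c) hold, $\widetilde{\opn{SO}}^+(L_c)$ satisfies the containment \eqref{ThetaDef}, so it is an admissible choice of $\Theta$ and the orbit reduction recalled from \cite{RaumImplement} applies, which is the assertion of the lemma.

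For (a): via $L_c \cong L_{c,b}$, Lemma \ref{stabext} embeds $\widetilde{\opn{O}}(L_c)$ into $\widetilde{\opn{O}}(L)$ by extension by the identity on $L_{c,b}^{\perp}$. Such an extension is block diagonal, so it preserves the determinant; and it introduces no new reflection factors into a decomposition \eqref{refdec}, so by \eqref{spinordef} it preserves the spinor norm. Therefore $\widetilde{\opn{SO}}^+(L_c)$ is carried into $\widetilde{\opn{O}}(L) \cap \opn{SO}^+(L) = \widetilde{\opn{SO}}^+(L)$, which by hypothesis lies in $\Gamma$. For (b): every element of $\widetilde{\opn{SO}}^+(L_c)$ has trivial spinor norm, hence lies in $\opn{O}^+(L_c \otimes \RR) = \opn{O}^+(L_c^{\vee} \otimes \RR)$, and this group preserves the component $C^+(L_c^{\vee})$ of $C(L_c^{\vee})$ by definition; so $\widetilde{\opn{SO}}^+(L_c) \subset \opn{stab}_{\Gamma}(C^+(L_c^{\vee}))$.

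Requirement (c) is where the hypothesis that $\opn{Hom}(\opn{SO}^+(L), \CC^*)$ is trivial must be used. As $\chi_1 \otimes \chi_2$ factors through $\Gamma/[\Gamma,\Gamma]$, it is enough to show $\widetilde{\opn{SO}}^+(L_c) \subset [\Gamma,\Gamma]$; and since $\widetilde{\opn{SO}}^+(L) \subset \Gamma$ we have $[\widetilde{\opn{SO}}^+(L),\widetilde{\opn{SO}}^+(L)] \subset [\Gamma,\Gamma]$, so it suffices to prove $\widetilde{\opn{SO}}^+(L_c) \subset [\widetilde{\opn{SO}}^+(L),\widetilde{\opn{SO}}^+(L)]$. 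This in turn holds as soon as $\widetilde{\opn{SO}}^+(L)$ equals its own commutator subgroup. Thus the lemma reduces to the statement that, for a lattice $L$ of signature $(2,n)$ with $\opn{Hom}(\opn{SO}^+(L),\CC^*)$ trivial, the stable group $\widetilde{\opn{SO}}^+(L)$ is perfect.

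This last reduction is the crux, and it is not formal: $\widetilde{\opn{SO}}^+(L)$ is a finite-index normal subgroup of $\opn{SO}^+(L)$, and finite-index normal subgroups of a perfect group need not be perfect, so the lattice structure must enter. The approach I would follow is the one behind \cite{abelianisation}: the conditions forcing $\opn{Hom}(\opn{SO}^+(L),\CC^*)$ to be trivial (cf. Theorem \ref{abelianisationthm}) are strong-approximation-type conditions on $L$, under which $\widetilde{\opn{SO}}^+(L)$ is generated by Eichler transvections; the commutator identities among Eichler transvections attached to distinct isotropic vectors place a finite-index subgroup of each one-parameter family of transvections inside $[\widetilde{\opn{SO}}^+(L),\widetilde{\opn{SO}}^+(L)]$, and strong approximation for the spin group kills the remaining finite defect, so that $\widetilde{\opn{SO}}^+(L)$ is perfect. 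In practice I would invoke this perfectness directly from \cite{abelianisation}; combined with (a) and (b), it gives the lemma.
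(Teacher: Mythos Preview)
Your parts (a) and (b) are exactly the paper's argument: its entire proof is the single sentence invoking Lemma~\ref{stabext} for $\widetilde{\opn{SO}}^+(L_c)\subset\widetilde{\opn{SO}}^+(L)\subset\Gamma$ and then asserting $\widetilde{\opn{SO}}^+(L_c)\subset\opn{Stab}_{\Gamma}(\overline{C}^+(L_c))$. Your added remarks that extension by the identity preserves determinant and real spinor norm are correct and simply make explicit what the paper leaves implicit.

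The divergence, and the genuine gap, is in (c). The paper's proof says nothing whatsoever about $\opn{Ker}(\chi_1\otimes\chi_2)$; you correctly notice that this needs justification and try to supply it. But your reduction lands on the claim ``$\widetilde{\opn{SO}}^+(L)$ is perfect'', and you do not deduce this from the lemma's actual hypothesis. Instead you appeal to the Kneser-type conditions behind Theorem~\ref{abelianisationthm} and the machinery of \cite{abelianisation}. That is a strict strengthening: the Kneser conditions are \emph{sufficient} for $\opn{Hom}(\opn{SO}^+(L),\CC^*)=1$, not equivalent to it, and --- as you yourself flag --- perfectness of $\opn{SO}^+(L)$ does not in general descend to the finite-index normal subgroup $\widetilde{\opn{SO}}^+(L)$. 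So what your argument actually proves is a weaker lemma, one with the Kneser conditions (or with $\opn{Hom}(\widetilde{\opn{SO}}^+(L),\CC^*)=1$) as hypothesis, rather than the lemma as stated. In fairness, your attempt exposes that the paper's one-line proof leaves precisely this point unaddressed; a hypothesis on $\widetilde{\opn{SO}}^+(L)$ rather than $\opn{SO}^+(L)$ would make both the paper's argument and yours go through immediately.
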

\begin{proof}
  By Lemma \ref{stabext}, $\widetilde{\opn{SO}}^+(L_c) \subset \widetilde{\opn{SO}}^+(L)$ and the result follows as $\widetilde{\opn{SO}}^+(L_c) \subset \opn{Stab}_{\Gamma}(\overline{C}^+(L_c))$. 
\end{proof}
\subsection{The Baily-Borel compactification}
If $L$ is a lattice of signature $(2, n)$ and $\Gamma \subset \opn{O}^+(L)$ is a subgroup of finite index, the quotient
\beq
\fc_L(\Gamma) = \dc_L/ \Gamma
\eeq
is known as an \emph{orthogonal modular variety}.
Orthogonal modular varieties are quasi-projective \cite{bailyborel} but, in general, are non-compact.
The simplest compactification of $\fc_L(\Gamma)$ is the \emph{Baily-Borel compactification} $\fc_L(\Gamma)^*$, which can be defined as $\opn{Proj} M_*(\Gamma, \mathds{1})$ \cite{bailyborel}.
\begin{theorem}[\hspace{1sp}\cite{handbook}]\label{BailyBorelThm}
  The Baily-Borel compactification $\fc_L(\Gamma)^*$ decomposes  as
  \beq
  \fc_L(\Gamma)^* = \fc_L(\Gamma) \sqcup \bigsqcup_{\Pi} C_{\Pi} \sqcup \bigsqcup_{\ell} Q_{\ell} 
  \eeq
where each $C_{\Pi}$ is a modular curve, each $Q_{\ell}$ is a point and the indices $\Pi$ and $\ell$ are taken over representatives of $\Gamma$-orbits of totally isotropic planes and isotropic lines in $L \otimes \QQ$, respectively.
A point $Q_{\ell}$ is contained in the closure of $C_{\Pi}$ if and only if representatives can be chosen such that $\ell \subset \Pi$.
\end{theorem}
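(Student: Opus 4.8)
The plan is to deduce the statement from the general theory of the Baily--Borel compactification applied to the arithmetic quotient $\fc_L(\Gamma) = \dc_L/\Gamma$. Recall that Baily--Borel construct a compactification $\overline{\dc_L}^{\,\mathrm{bb}} = \dc_L \sqcup \bigsqcup_F F$, where $F$ ranges over the \emph{rational} boundary components of $\dc_L$, equipped with the Satake (cylindrical) topology, on which $\Gamma$ acts properly discontinuously, and $\fc_L(\Gamma)^* = \overline{\dc_L}^{\,\mathrm{bb}}/\Gamma$. The statement then reduces to three points: (i) classify the rational boundary components of $\dc_L$; (ii) identify the quotient of each component by its stabiliser in $\Gamma$; (iii) determine the closure relations among components and push them to the quotient.

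For (i): the rational boundary components of $\dc_L$ are in bijection with the non-zero totally isotropic subspaces of $L \otimes \QQ$ (equivalently, with the rational maximal parabolic subgroups of $\opn{O}^+(L\otimes\RR)$, which are the stabilisers of such subspaces). Since $L$ has signature $(2,n)$ its Witt index over $\RR$ is at most $2$, so a totally isotropic subspace of $L\otimes\QQ$ has dimension $1$ or $2$. Working in the model $\dc_L \subset \PP(L\otimes\CC)$, or, more concretely, in the tube-domain coordinates $\hc_{L,c}$ attached to a primitive isotropic vector $c$, one checks that an isotropic line $\ell$ contributes a $0$-dimensional boundary component (a point) and that a totally isotropic plane $\Pi$ contributes a $1$-dimensional boundary component $F_{\Pi}$ biholomorphic to an upper half-plane $\HH$.

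For (ii): for an isotropic line $\ell$ the boundary component is a single point, so its image in $\fc_L(\Gamma)^*$ is a point $Q_{\ell}$, and two lines give the same point precisely when they are $\Gamma$-equivalent. For a totally isotropic plane $\Pi$, the subgroup $\opn{Stab}_{\Gamma}(\Pi)$ acts on $F_{\Pi} \cong \HH$ through a homomorphism into $\opn{PSL}_2(\RR)$ whose image $\Gamma_{\Pi}$ is an arithmetic subgroup; hence $F_{\Pi}/\opn{Stab}_{\Gamma}(\Pi) = \HH/\Gamma_{\Pi}$ is a modular curve $C_{\Pi}$, again depending only on the $\Gamma$-orbit of $\Pi$.

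For (iii): in the Satake topology the closure of $F_{\Pi}$ consists of $F_{\Pi}$ together with the point-components $F_{\ell}$ indexed by the isotropic lines $\ell \subset \Pi$ --- these are exactly the cusps adjoined to $\HH/\Gamma_{\Pi}$ to form $C_{\Pi}$. Passing to the $\Gamma$-quotient, $Q_{\ell}$ lies in the closure of $C_{\Pi}$ if and only if some $\Gamma$-translate of $\ell$ is contained in $\Pi$, i.e. representatives can be chosen with $\ell \subset \Pi$. The main obstacle is the careful bookkeeping of the Satake topology: verifying that the closure of a curve-component meets exactly the point-components attached to its sub-lines, and that the induced map $\opn{Stab}_{\Gamma}(\Pi) \to \opn{PSL}_2(\RR)$ has arithmetic image so that $C_{\Pi}$ is genuinely a modular curve with finitely many cusps. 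This last verification can alternatively be carried out by hand, analysing the degeneration of the coordinates $\hc_{L,c}$ as $c$ runs over the isotropic lines contained in $\Pi$.
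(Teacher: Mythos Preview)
The paper does not prove this theorem: it is stated with a citation to \cite{handbook} and used as background, so there is no proof in the paper to compare your proposal against. Your sketch is the standard argument one finds in the literature (classify rational boundary components by isotropic subspaces of $L\otimes\QQ$, identify the one- and two-dimensional cases, and read off closure relations from the Satake topology), and it is correct as an outline; since the paper simply imports the result, your approach is neither the same as nor different from the paper's---there is nothing further to compare.
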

In \S\ref{BoundarySection}-\ref{ExampleSection} we explain how to calculate the boundary configuration of $\fc_L(\Gamma)^*$ using
Algorithms \ref{TitsAlgo}, \ref{MaximalAlgo} and \ref{SuperGroupAlg}.
 \section{Orbits of vectors}
For a lattice $L$, let $\Gamma \subset \opn{O}(L)$ be a subgroup and let $v_1, v_2 \in L^{\vee} \subset L \otimes \QQ$.
If there exists $g \in \Gamma \subset \opn{O}(L)$ such that $gv_1 = v_2$ we say that $v_1$ and $v_2$ are \emph{equivalent under $\Gamma$}, which we denote by  $v_1 \sim_{\Gamma} v_2$; otherwise,  $v_1$ and $v_2$ are said to be \emph{inequivalent under $\Gamma$}, which we denote by $v_1 \not \sim_{\Gamma} v_2$. 
If $v_1$ and $v_2$ are non-isotropic, these relations can be calculated using Algorithms \ref{non_iso_def_alg}, \ref{non_iso_indef_alg0} and \ref{non_iso_indef_alg}.
We will treat the isotropic case separately in \S\ref{BoundarySection}.
In order to prove Algorithm \ref{non_iso_def_alg}, \ref{non_iso_indef_alg0} and \ref{non_iso_indef_alg} we shall need Lemma \ref{extlem}, which is contained but not proved in \cite{Nikulin}.
\begin{lemma}\label{extlem}
  For $i=1,2$, let $S_i \subset L$ be a sublattice and let $K_i:=S_i^{\perp} \subset L$.
  The inclusion 
  \begin{equation}\label{lattice_inclusion}
  S_i \op K_i \subset L \subset L^{\vee} \subset S_i^{\vee} \op K_i^{\vee}
  \end{equation}
  defines subgroups
  \beq
  H_i = L/(S_i \op K_i) \subset D(S_i) \op D(K_i)
  \eeq
  and natural homomorphisms $p_{S_i}:H_i \rightarrow D(S_i)$ and $p_{K_i}:H_i \rightarrow D(K_i)$.
  We let $H_{S_i}$ and $H_{K_i}$ denote the respective images of $p_{S_i}$ and $p_{K_i}$ and define $\gamma_i=p_{K_i} \circ p_{S_i}^{-1}:H_{S_i} \rightarrow H_{K_i}$.
  Then, 
  \begin{enumerate}
  \item the homomorphisms $p_{S_i}$ and $p_{K_i}$ are monomorphisms if and only if both $S_i \subset L$ and $K_i \subset L$ are primitive.
  \item If $\varphi:S_1 \xrightarrow{\sim} S_2$ then $\varphi$ extends to an element of $g \in \opn{O}(L)$ if and only if there exists $\psi:K_1 \rightarrow K_2$ such that $\overline{\psi} \circ \gamma_1 = \gamma_2 \circ \overline{\varphi}$.
The map $g$ is given by the natural extension of $\varphi \circ \psi$ from $S \op K$ to $L$.
  \item Let $\iota_i:D(L) \hookrightarrow (S_i^{\vee} \op K_i^{\vee})/L$ be the natural map defined by \eqref{lattice_inclusion}.
    If $g \in \opn{O}(L)$ is the extension of $\varphi \op \psi$, then $g \in \opn{O}_{\Ac}(L)$ if and only if $\iota_2^{-1} \circ (\overline{\varphi} \op \overline{\psi} ) \circ \iota_1 \in \Ac$, where $\Ac \subset \opn{O}(D(L))$.
  \end{enumerate}
\end{lemma}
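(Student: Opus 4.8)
The plan is to reduce everything to the classical gluing description of lattices in their orthogonal complements, in the style of \cite{Nikulin}, and then track the discriminant-form data through each of the three claims. Throughout, fix $i$ and write $S=S_i$, $K=K_i$, $H=H_i$, $\gamma=\gamma_i$, suppressing the index where harmless.

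First I would establish (1). The composite $S\op K\hookrightarrow L\hookrightarrow S^\vee\op K^\vee$ identifies $L/(S\op K)$ with the subgroup $H\subset D(S)\op D(K)$, and the two projections $p_S,p_K$ are the restrictions of the coordinate projections $D(S)\op D(K)\to D(S)$ and $\to D(K)$. The kernel of $p_S$ consists of the classes in $H$ of the form $(0,\bar k)$ with $k\in K^\vee$; such a class lifts to an element of $L$ lying in $K^\vee$ and orthogonal to $S$, i.e.\ to an element of $(S^\perp)^\vee\cap L'$ for the saturation $L'$ of $S\op K$. Concretely $\ker p_S=0$ exactly when no nonzero element of $K^\vee/K$ glues trivially on the $S$-side, which is equivalent to $K$ being primitively embedded, i.e.\ $L/K$ torsion-free on the $K$-summand; symmetrically for $p_K$ and $S$. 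I would spell this out by noting that $p_S$ is injective iff $L\cap K^\vee=K$ inside $K\otimes\QQ$ iff $K=(S^\perp)$ is primitive in $L$, and likewise $p_K$ injective iff $S$ is primitive. Since $S$ primitive forces $K=S^\perp$ primitive and vice versa, both $p_S,p_K$ are monomorphisms precisely when $S\subset L$ (equivalently $K\subset L$) is primitive. This is bookkeeping once the identifications are set up; I expect no real obstacle here.

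For (2), assume $\varphi\colon S_1\xrightarrow{\sim}S_2$. An extension $g\in\opn{O}(L)$ must map $S_1$ to $S_2$ and hence $K_1=S_1^\perp$ to $K_2=S_2^\perp$ isometrically, so $g$ restricts to some $\psi\colon K_1\xrightarrow{\sim}K_2$ and $g=\varphi\op\psi$ on $S_1\op K_1$, extended to $L$ as in the statement. Such a $g$ is well-defined on $L$ (i.e.\ $\varphi\op\psi$ carries $L\subset S_1^\vee\op K_1^\vee$ into $L\subset S_2^\vee\op K_2^\vee$) if and only if the induced map $\bar\varphi\op\bar\psi$ on $D(S_1)\op D(K_1)$ sends $H_1$ onto $H_2$. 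Under the identification of $H_j$ with the graph of $\gamma_j\colon H_{S_j}\to H_{K_j}$, the map $\bar\varphi\op\bar\psi$ sends the graph of $\gamma_1$ to the graph of $\bar\psi\circ\gamma_1\circ\bar\varphi^{-1}$, so $H_1\mapsto H_2$ is equivalent to $\bar\psi\circ\gamma_1\circ\bar\varphi^{-1}=\gamma_2$, i.e.\ to $\bar\psi\circ\gamma_1=\gamma_2\circ\bar\varphi$ as asserted. Conversely, given $\psi\colon K_1\to K_2$ (which one checks is automatically an isometry, e.g.\ because $K_2$ and $K_1$ lie in the same genus once $S_1\cong S_2$ and the overlattice data match, or simply because the hypothesis is to be read with $\psi$ an isometry of $K_1$ onto $K_2$) satisfying $\bar\psi\circ\gamma_1=\gamma_2\circ\bar\varphi$, the map $\varphi\op\psi$ preserves $H_1\mapsto H_2$ and therefore descends to an isometry $g\in\opn{O}(L)$ restricting to $\varphi$ on $S_1$. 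The main subtlety is the compatibility of the graph identification with the isometry structure — one must confirm that $\bar\varphi,\bar\psi$ are isometries of the discriminant forms $q_{S_i},q_{K_i}$, which holds because $\varphi,\psi$ are isometries of the integral lattices. This is where I expect to spend the most care.

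Finally, for (3): the inclusion \eqref{lattice_inclusion} gives $\iota_i\colon D(L)=L^\vee/L\hookrightarrow(S_i^\vee\op K_i^\vee)/L$, and by construction this is compatible with the orthogonal-complement decomposition, so the action of $g=\varphi\op\psi$ on $D(L)$ is computed by transporting along $\iota_1$, applying the induced map $\bar\varphi\op\bar\psi$ on $(S_1^\vee\op K_1^\vee)/(S_1\op K_1)$ (which restricts to the well-defined map on $(S_1^\vee\op K_1^\vee)/L$ by part (2)), and transporting back along $\iota_2^{-1}$. Hence $\bar g=\iota_2^{-1}\circ(\bar\varphi\op\bar\psi)\circ\iota_1$ as an element of $\opn{O}(D(L))$, and $g\in\opn{O}_{\Ac}(L)$ iff $\bar g\in\Ac$, which is the claim. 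This step is essentially a diagram chase once (2) is in place; the only point to verify is that $\iota_2^{-1}$ is applied to something genuinely in the image of $\iota_2$, which again follows from $H_1\mapsto H_2$.

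In summary, the one genuine obstacle is part (2): matching the graph-of-$\gamma$ picture of the overlattice with the isometry condition and checking that $\bar\varphi\op\bar\psi$ preserves $q_{S}\op q_{K}$ and hence $H$; parts (1) and (3) are then formal consequences of the same identifications.
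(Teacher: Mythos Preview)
Your proposal is correct and follows essentially the same approach as the paper: an element chase for part (1), the graph-of-$\gamma$ description of $H_i$ to reduce (2) to the commutation $\overline{\psi}\circ\gamma_1=\gamma_2\circ\overline{\varphi}$, and a diagram chase for (3). One small slip: you write ``$S$ primitive forces $K=S^\perp$ primitive and vice versa'', but the converse fails (orthogonal complements are always primitive, so $K$ primitive says nothing about $S$); this does not affect your argument, since the biconditional in (1) only needs $p_K$ injective $\Leftrightarrow$ $S$ primitive together with the automatic primitivity of $K$.
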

\begin{proof}
  \begin{enumerate}
  \item
    Let $S:=S_i$ and $K:=K_i$. 
    For $x \in L$, let $s \in S^{\vee}$ and $k \in K^{\vee}$ be such that $x = s + k$.
Suppose $S \subset L$ is primitive and $0 \not \equiv x \bmod{S \op K}$.
    As $S$ is primitive, if $p_S(x) = 0$ then $s \in S$ and $k \in L \cap K^{\vee} = K$ (as $K=S^{\perp}$ is primitive).
    Hence, from the contradiction $0 \equiv x \bmod{S \op K}$, $p_S$ is a monomorphism.
Now suppose $p_S$ and $p_K$ are monomorphisms.
    If $x \bmod{ S}$ is torsion then $k=0$ and as  $p_S$ is a monomorphism, the contradiction $0 \equiv p_S(x) \equiv x \bmod{S}$ implies $S$ is primitive.
    The argument for $K$ is identical.
  \item  The map $\varphi \op \psi$ extends to $g \in \opn{O}(L)$ if and only if it preserves $H$ which,  by definition of $\gamma_1$ and $\gamma_2$, is equivalent to $\overline{\psi} \circ \gamma_1 = \gamma_2 \circ \overline{\varphi}$.
  \item Immediate from definition.
  \end{enumerate}
\end{proof}
\subsection{Orbits of non-isotropic vectors}
\begin{algo}\label{non_iso_def_alg}
  For a lattice $L$ of rank $n$,  let $\Gamma \subset \opn{O}(L)$ be a subgroup and let $v_1, v_2 \in L \otimes \QQ$.
  If $v_1$ is non-isotropic and $v_1^{\perp}$ is definite then one can determine if $v_1  \sim_{\Gamma} v_2$ by proceeding as follows. 
\end{algo}
\begin{enumerate}
\item \textbf{For} $i \in \{1,2 \}$, \textbf{let} $c_i \in \QQ_{>0}$ be minimal such that $c_i v_i \in L$. \label{ANID1}
\item    \textbf{If} $v_1^2 \neq v_2^2$ \textbf{or} $c_1 \neq c_2$ \textbf{return} $v_1 \not \sim_{\Gamma} v_2$.    \label{ANID2}
\item \textbf{For} $i \in \{1,2 \}$         \label{ANID3}
  \begin{enumerate}
  \item \textbf{Let} $w_i := c_i v_i$.
  \item \textbf{Let} $(q_1 \vert \ldots \vert q_n):=Q(\hat{w_i})$.
  \item \textbf{Let} $K_i=\la k_{ij} \mid j=1, \ldots, n-1 \ra$ where $k_{ij}=q_{j+1}$.
  \item \textbf{Let} $\iota_i:=(w_i \vert k_{i1} \vert \ldots \vert k_{i(n-1)})$.
  \end{enumerate}
\item  \textbf{Let} $\varphi$ be the map $w_1 \mapsto w_2$. \label{ANID4}
\item \textbf{For} $\psi \in \opn{Iso}(K_1, K_2)$ \label{ANID5}
  \begin{enumerate}
  \item \textbf{Let} $\theta:=\iota_2 \circ (\varphi \op \psi) \circ \iota_1^{-1}$
  \item \textbf{If} { $\theta \in \Gamma$}
    \textbf{return} $v_1 \sim_{\Gamma} v_2$.
  \end{enumerate}
\item \textbf{Return} $v_1 \not \sim_{\Gamma} v_2$. \label{ANID6}
\end{enumerate}
\begin{proof}
  As $v_i^2$ and $c_i$ are invariant under $\opn{O}(L)$, \ref{ANID1} and \ref{ANID2} serve as preliminary tests.
In \ref{ANID3}, we calculate $K_i:=w_i^{\perp} \subset L$.
  By the Smith normal form (as $d_2 = \ldots = d_n = 0$), $K_i \subset L$ is primitive and as $K_i \perp w_i$ then $K_i = w_i^{\perp} \subset L$.
In \ref{ANID4}, we define an embedding $\iota_i$ of $\la w_i \ra \op K_i \subset L$.
By Lemma \ref{extlem}, $w_1 \sim_{\Gamma} w_2$ if and only if there exists $\varphi:=g \vert_{\la w_1 \ra}: w_1 \mapsto w_2$
and
$\psi:=g \vert_{K_1}: K_1 \rightarrow K_2$ extending to $\Gamma$.
We search for suitable $\psi$ in \ref{ANID5}.
\end{proof}
As $K_1$ is definite, $\opn{Iso}(K_1, K_2)$ in \ref{ANID5} can be calculated efficiently using the fast isomorphism testing of \cite{PleskenPohst, PleskenSouvignier}.
In \ref{ANID5}(b), one has to verify whether $\theta \in \Gamma$.
If $\Gamma=\opn{SO}_{\Ac}(L)$ or $\opn{O}_{\Ac}(L)$ one can simply check if $\theta \in \opn{GL}(N, \ZZ)$ and $\overline{\theta} \in \Ac$.
However, if $\Gamma =\opn{SO}^+_{\Ac}(L)$ or $\opn{O}^+_{\Ac}(L)$ one also has to verify a spinor norm condition.
Effective methods exist (e.g. \cite[p.18-20]{cassels}) for decomposing $\psi$ into a product of reflections, from which one can calculate  $\opn{sn}_{\RR}(\theta)$ using \eqref{spinordef}; however, in the case of Lorenztian $L$, we demonstrate an alternative approach in Example \ref{nonisoorbit}.
\begin{example}\label{nonisoorbit}
  Let $L=U \op A_3$ where
  \begin{center}
    \begin{tabular}{ccc}
    $G(U) = \bpm 0 & 1 \\ 1 & 0 \epm$
    & and &
      $G(A_3) = - \bpm 2 & 1 & 0 \\ 1 & 2 & 1 \\ 0 & 1 & 2 \epm$
    \end{tabular}
    \end{center}
  and suppose $v_1, v_2 \in L \otimes \QQ$
  are given by
  $v_1= (4,4,1,2,-1)$ and $v_2 = (36, 144, 5, -30, 83)$.
  If $\Gamma = \widetilde{\opn{O}}^+(L)$ then $v_1 \sim_{\Gamma} v_2$.
\end{example}
\begin{proof}
  We apply Algorithm \ref{non_iso_def_alg}.
  In \ref{ANID1} - \ref{ANID3}, we have  $c_1=c_2=1$, $v_1^2=v_2^2=20$, $w_1=v_1$, $w_2 = v_2$, 
  \begin{center}
    \begin{tabular}{ccc}
      $Q(\hat{w_1}) =
      \bpm
      0 & 1 & 0 & 0 & 0 \\
      0 & 0 & 1 & 0 & 0 \\
      0 & 0 & 0 & 1 & 0 \\
      -1 & 1 & 1 & -1 & 0 \\
      0 & 0 & 0 & 0 & 1
      \epm$,
      &  &
      $Q(\hat{w_2}) =
      \bpm
      0 & 1 & 0 & 0 & 0 \\
      0 & 0 & 1 & 0 & 0 \\
      0 & 0 & 0 & 1 & 0 \\
      -5 & 180 & 45 & 25 & 34 \\
      1 & -36 & -9 & -5 & -7
      \epm$, \\
      \\
      $\iota_1 =
      \bpm
      4 & 1 & 0 & 0 & 0 \\
      4 & 0 & 1 & 0 & 0 \\
      1 & 0 & 0 & 1 & 0 \\
      2 & 1 & 1 & -1 & 0 \\
      -1 & 0 & 0 & 0 & 1
      \epm$
      & and &
      $\iota_2 =
      \bpm
      36 & 1 & 0 & 0 & 0 \\
      144 & 0 & 1 & 0 & 0 \\
      5 & 0 & 0 & 1 & 0 \\
      -30 & 180 & 45 & 25 & 34 \\
      83 & -36 & -9 & -5 & -7
      \epm$.      
    \end{tabular}
  \end{center}
  The Gram matrices of $K_1$ and $K_2$ are given by
  \begin{center}
    \begin{tabular}{ccc}
      $G(K_1) =
      \bpm
      -2 & -1 & 1 & -1 \\
      -1 & -2 & 1 & -1 \\
      1 & 1 & -2 & 1 \\
      -1 & -1 & 1 & -2
      \epm$
      & and &
      $G(K_2) =
      \bpm
      -54432 & -13607 & -7740 & -10260 \\
      -13607 & -3402 & -1935 & -2565 \\
      -7740 & -1935 & -1102 & -1459 \\
      -10260 & -2565 & -1459 & -1934
      \epm$.
    \end{tabular}
  \end{center}
  In \ref{ANID5}, we search over $\opn{Iso}(K_1, K_2)$ (e.g. using \cite{FinckePohst, PleskenPohst, PleskenSouvignier}) for $\theta=\iota_2 \circ (\varphi \op \psi) \circ \iota_1^{-1}  \in \Gamma$.
  We find $\theta \in \opn{O}(L)$, given by
  \begin{center}
    \begin{tabular}{ccc}
        $\theta =
        \bpm
        11 & 5 & -11 & -13 & -9 \\
        43 & 21 & -46 & -51 & -36 \\
        1 & 1 & -1 & -2 & -2 \\
        -9 & -5 & 10 & 12 & 8 \\
        25 & 12 & -26 & -30 & -21
        \epm
        $
        & for &
        $\psi =
        \bpm
        -2 & -8 & 2 & -9 \\
        -8 & -30 & 5 & -36 \\
        -1 & -1 & 1 & -2 \\
        22 & 83 & -18 & 97
        \epm$.
    \end{tabular}
    \end{center}
The vector $w = \frac{1}{4}(0,0,3,-2,1) \in L^{\vee} \subset L \otimes \QQ$ represents a generator for the cyclic group $D(L)$.
  Therefore, to determine $\theta \in \widetilde{\opn{O}}^+(L)$, we verify $w \equiv \theta w \bmod{L}$.  
As $L$ is Lorentzian, $ g \in \opn{O}^+(L)$  if and only if $ g (\Cc(L)^+) = \Cc(L)^+$. 
  The quadratic form of $L$ is diagonalised to $\opn{diag}(-2, -1, 1, -\sqrt{2} - 2, \sqrt{2} - 2)$ by
  \beq
  P =
  \bpm
  0 & -1 & 1 & 0 & 0 \\
  0 & 1 & 1 & 0 & 0 \\
  -1 & 0 & 0 & 1 & 1 \\
  0 & 0 & 0 & \sqrt{2} & -\sqrt{2} \\
  1 & 0 & 0 & 1 & 1
  \epm
  \eeq
 and so, in new coordinates given by $P$, 
  \beq
  \Cc(L)^+ = \{(x_0, x_1, x_2, x_3, x_4) \in \RR^5 \mid \text{$x_0>0$ and $\omega_0 x_0^2 > \sum_{i=1}^4 \omega_i x_i^2$} \}. 
  \eeq
  As ${}^{\tau}x=(1,0,0,0,0) \in \Cc(L)^+$ and $(P^{-1} \theta P)^{\tau}x = ^{\tau} (3,\,4,\,6,\,-\frac{1}{2} \, \sqrt{2} + 1,\,\frac{1}{2} \, \sqrt{2} + 1)$
  then $\theta$ preserves $\CC(L)^+$, implying $v_1 \sim_{\Gamma} v_2$.
\end{proof}
\begin{algo}\label{non_iso_indef_alg0}
  If $L$ is a lattice of rank $n$, let $v_1, v_2 \in L \otimes \QQ$ and $\Gamma:=\opn{O}_{\Ac}(L)$ where $\Ac \subset \opn{O}(D(L))$ is subgroup. 
  If, simultaneously, 
$v_1$ is non-isotropic,
  $v_1^{\perp}$ is indefinite, 
  the natural map $\opn{O}(L) \rightarrow \opn{O}(D(L))$ is surjective,
then one can determine if $v_1 \sim_{\Gamma} v_2$ as follows.
  \begin{enumerate}
  \item \textbf{For} $i \in \{1,2\}$ \label{NIIA01}
    \begin{enumerate}
    \item \textbf{Let} $c_i \in \QQ_{>0}$ be minimal such that $w_i:=c_i v_i \in L$.
    \item \textbf{If} $c_1 \neq c_2$ \textbf{or} $v_1^2 \neq v_2^2$ \textbf{then return} $v_1 \not \sim_{\Gamma} v_2$. 
    \item \textbf{Let} $K_i:=w_i^{\perp} \subset L$. 
    \item For the natural inclusion
      \beq
      \la w_i \ra \op K_i
      \subset L
      \subset L^{\vee}
      \subset \la w_i \ra^{\vee} \op K_i^{\vee},
      \eeq
      \textbf{let} $H_i:=L/\la w_i \ra \op K_i \subset D(\la w_i \ra ) \op D(K_i)$. 
    \item \textbf{Let} $\iota_i$ be the map $\iota_i:D(L) \xrightarrow{\sim} D(\la w_i \ra) \op D(K_i) \bmod{H_i}$. 
    \end{enumerate}
  \item \textbf{If} $K_1 \not \cong K_2$ \textbf{then return} $v_1 \not \sim_{\Gamma} v_2$.
  \item \textbf{For} $\overline{\varphi} \op \overline{\psi} \in \{ \pm 1 \} \op \opn{Iso}(q_{K_1}, q_{K_2})$ \label{NIIA02}
    \bi
  \item[] \textbf{If}
    $(\overline{\varphi} \op \overline{\psi})(H_1) = H_2$ \textbf{and} $\iota_2^{-1} \circ (\overline{\varphi} \op \overline{\psi}) \circ \iota_i \in \Ac$ \textbf{then}
    \bi
  \item[] \textbf{return } $v_1 \sim_{\Gamma} v_2$. 
    \ei
    \ei
  \item \textbf{Return} $v_1 \not \sim_{\Gamma} v_2$. \label{NIIA03}
  \end{enumerate}
\end{algo}
\begin{proof}
  Lemma \ref{extlem} with \ref{NIIA01}(a) and \ref{NIIA01}(b) serving as preliminary tests.
\end{proof}
Algorithm \ref{non_iso_indef_alg0} can be rephrased in coordinate form as follows.
\begin{algo}\label{non_iso_indef_alg}
  If $L$ is a lattice of rank $n$,
  let $v_1, v_2 \in L \otimes \QQ$  and
  $\Gamma:=\opn{O}_{\Ac}(L)$ where $\Ac \subset \opn{O}(D(L))$ is a subgroup.
  If, simultaneously, 
$v_1$ and $v_2$ are non-isotropic,
  $v_1^{\perp}$ is indefinite, 
  the natural map $\opn{O}(L) \rightarrow \opn{O}(D(L))$ is surjective, 
then one can determine if $v_1 \sim_{\Gamma} v_2$ as follows. 
  \begin{enumerate}
  \item \textbf{For} $i \in \{1,2\}$  \label{NIIA1}
    \begin{enumerate}
    \item \textbf{Let} $c_i \in \QQ_{>0}$ be minimal such that $w_i:=c_i v_i \in L$.
    \item \textbf{Let} $\alpha_i := w_i^2 / \vert w_i^2 \vert$.
    \end{enumerate}
  \item \textbf{If} $v_1 ^2 \neq v_2^2$ \textbf{or} $c_1 \neq c_2$ \textbf{return} $v_1 \not \sim_{\Gamma} v_2$. \label{NIIA2}
  \item \textbf{For} $i \in \{1,2\}$ \label{NIIA3} 
    \begin{enumerate}
\item \textbf{Let} $(q_1 \vert \ldots \vert q_n ):=Q(\hat{w_i})$.
    \item \textbf{Let} $K_i:= \la k_{ij} \vert j=1, \ldots, n-1 \ra$ where $k_{ij}=q_{j+1}$.
    \item \textbf{Let}
      $[d_1, \ldots, d_n]_{n,n}:=P(G(K_i)) G(K_i) Q(G(K_i))$ and identify
\begin{equation}\label{DKcanonical}
D(K_i) \cong \bigoplus_j C_{d_j}.
\end{equation}
    \item \textbf{Let}
\beq
      f_{il} := \frac{1}{d_k} \sum_{j=1}^n q_{jl} k_{il}  
      \eeq
for $l=1, \ldots, n-1$.
    \end{enumerate} 
  \item \textbf{If} $K_1 \not \cong K_2$ \textbf{then} \label{NIIA4}
    \textbf{return} $v_1 \not \sim_{\Gamma} v_2$ \textbf{else} 
    \begin{enumerate}
    \item \textbf{Let}
      $\underline{e}_1 = {}^{\tau}(1, 0, \ldots, 0)$, 
      $\theta_{i1} := (w_i \vert k_{i1} \vert \ldots \vert k_{in} )$, 
      $\theta_{i2} := (w_i^2) \op G(K_i)$, 
      $\theta_{i3} := (\alpha_i \underline{e}_1 \vert f_{i1} \vert \ldots \vert f_{i(n-1)})$ and
      $\lambda_i:= \theta_{i3} \circ \theta_{i2} \circ \theta_{i1}^{-1}$.
    \item \textbf{Let} $H_i$ be the subgroup of $D(\la w_i \ra) \op D(K_i)$ generated by the columns of $\lambda_i$ taken modulo ${}^{\tau}(\vert w_i^2 \vert, d_1, \ldots, d_n)$. 
  \item \textbf{If} $H_1 \not \cong H_2$ \textbf{then return} $v_1 \not \sim_{\Gamma} v_2$.
  \item \textbf{Let} $\iota_i:= \lambda_i  \circ G(L)^{-1}$.
    \end{enumerate}
 \item \textbf{For} $\overline{\varphi} \op \overline{\psi} \in \{ \pm 1 \} \op \opn{Iso}(q_{K_1}, q_{K_2})$ \label{NIIA5}
    \bi
    \item[] \textbf{If} $(\overline{\varphi} \op \overline{\psi})(H_1) = H_2$ \textbf{and} $\theta:=\iota_2^{-1} \circ (\overline{\varphi} \op \overline{\psi}) \circ \iota_1 \bmod{L} \in \Ac$ \textbf{then}
      \bi
      \item[] \textbf{return} $v_1 \sim_{\Gamma} v_2$.  
      \ei
      \ei
    \item \textbf{Return} $v_1 \not \sim_{\Gamma} v_2$.
  \end{enumerate}
\end{algo}
\begin{proof}
The algorithm is essentially a rephrasing of Algorithm \ref{non_iso_indef_alg0}.
The term $\alpha_i$ in \ref{NIIA1} corresponds to the Smith normal form of $\la w_i \ra$, which we will use in \ref{NIIA4}.
  In \ref{NIIA3}(a), we calculate bases for the lattices $K_i$ as in Algorithm \ref{non_iso_def_alg} and in \ref{NIIA3}(b), we calculate the groups $D(K_i)$.
  Step \ref{NIIA3}(c) calculates representatives $f_{il}$ in $K_i^{\vee} \subset K_i \otimes \QQ$ for the canonical basis of $D(K_i)$ in \eqref{DKcanonical}.
  In \ref{NIIA4}, we calculate maps
\begin{align*}
    \iota_i:D(L) \rightarrow D(\la w_i \ra) \op D(K_i)
      && \text{and} &&
    \lambda_i:L / \la w_i \ra \op K_i \rightarrow D( w_i) \op D(K_i) 
  \end{align*}
  defined on generators in $L^{\vee}$, $L$ and $D(\la w_i) \op D(K_i)$ with $\lambda_i$ and $\iota_i$ as in \ref{NIIA4}(a) and \ref{NIIA4}(d), respectively.
  We conclude in \ref{NIIA5} by verifying the existence of
  $\varphi \op \psi:\la w_1 \ra \op K_1 \rightarrow \la w_2 \ra \op K_2$ extending to $\Gamma$.
\end{proof}
Unlike in Algorithm \ref{non_iso_def_alg}, the lattices $K_i$ of Algorithm \ref{non_iso_indef_alg0} and \ref{non_iso_indef_alg} are indefinite, precluding an application of the isomorphism tests of \cite{PleskenPohst, PleskenSouvignier}.
However, one can typically determine if $K_1 \cong K_2$ by using Theorem \ref{singclassthm} (which is originally due to \cite{KneserIndef} and rephrased in the language of discriminant forms in \cite{Nikulin}).
\begin{theorem}[{\hspace{1sp}\cite{KneserIndef}\cite[Theorem 1.13.2/1.14.2]{Nikulin}}]\label{singclassthm}
  Let $q$ be the discriminant form of a lattice.
  If both
\begin{enumerate}
\item $t_+ \geq 1$, $t_- \geq 1$ and $t_+ + t_- \geq 3$;
\item  $t_+ + t_- \geq 2 + l(q)$ where $l(q)$ is the minimum number of generators for the underlying group of $q$, 
\end{enumerate}
then there exists a lattice $L$ of signature $(t_+, t_-)$ with $q_L = q$. Furthermore, the natural map $\opn{O}(L) \rightarrow \opn{O}(D(L))$ is surjective and $\opn{gen}(L)$ contains a single class.
\end{theorem}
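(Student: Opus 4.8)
The statement bundles three assertions — existence of an even lattice $L$ with signature $(t_+,t_-)$ and $q_L\cong q$; surjectivity of $\opn{O}(L)\to\opn{O}(D(L))$; and $|\gen(L)|=1$ — and I would prove them in that order, working inside the theory of primitive embeddings into unimodular lattices (or, as an alternative, via the Kneser--Eichler theory of spinor genera). Throughout I treat as a standing hypothesis the Milgram congruence $\opn{sign}(q)\equiv t_+-t_-\bmod 8$; it is forced in the intended application, where $(t_+,t_-)$ is the signature of a lattice whose discriminant form is $q$.

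\textbf{Existence.} By Nikulin's realisation criterion, an even lattice with invariants $(t_+,t_-,q)$ exists provided the Milgram congruence holds and, at every prime $p$, one has $l(q_p)\le t_++t_-$ (together with a harmless supplementary condition at $p=2$ in the boundary case $l(q_2)=t_++t_-$). Since hypothesis (2) yields $t_++t_-\ge l(q)+2>l(q_p)$ for every $p$, all of these conditions hold with room to spare, so $L$ exists; concretely one chooses at each ramified prime a unimodular $\ZZ_p$-lattice to complement the $p$-part of $q$, patches over $\QQ$ by the Hasse principle, and reads off the integral structure.

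\textbf{Surjectivity and uniqueness.} Embed $L$, and any $L'$ in its genus, primitively into an \emph{indefinite} unimodular lattice $\Lambda$ — the same $\Lambda$ in both cases, since the recipe depends only on $(t_+,t_-,q)$ — with orthogonal complement $T$ satisfying $q_T\cong -q$ and $\opn{rank}(T)$ close to $l(q)$; because $\opn{rank}(L)=t_++t_-\ge 3$ by (1) one can keep $\Lambda$ indefinite of rank $\ge 3$. An indefinite unimodular lattice is determined up to isometry by its rank, signature and parity, so $\Lambda$ is canonical, and the reconstruction of $L$ from the triple $(\Lambda,T,\text{gluing})$ is governed entirely by $\opn{O}(q)=\opn{O}(D(L))$. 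Nikulin's existence-and-uniqueness theorem for primitive embeddings — whose numerical hypotheses reduce, in this configuration, exactly to (1) and (2) — then shows that all admissible data $(\Lambda,T,\text{gluing})$ form a single $\opn{O}(\Lambda)$-orbit, whence $L'\cong L$, and, by the same machinery, that any $\overline{g}\in\opn{O}(D(L))$ is realised by some $g\in\opn{O}(L)$ (one realises $\overline{g}$ inside $\opn{O}(\Lambda)$ while preserving $L$, the rank surplus being exactly what leaves room to do so). For $|\gen(L)|=1$ there is also the classical adelic argument: at every $p\mid 2\det L$ the rank surplus forces the spinor-norm group of $\opn{O}^+(L\otimes\ZZ_p)$ to contain $\ZZ_p^{*}$, so $\gen(L)$ is a single spinor genus, and Eichler's strong approximation theorem for the spin group collapses this to one class since $L$ is indefinite of rank $\ge 3$.

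The real content — and the step I expect to be the main obstacle — is this ``room'' input: Nikulin's primitive-embedding existence/uniqueness criterion, or, adelically, the local fact that a $\ZZ_p$-lattice with a unimodular summand of rank $\ge 2$ has full spinor-norm image and that $\opn{O}(L_p)\to\opn{O}(q_{L,p})$ is onto. Both are delicate at $p=2$, where the argument splits according to the parity of the $2$-adic part of $q$ — whether $L$ splits off a genuine hyperbolic plane $U$ or only a $U(2)$ — and one must verify the weaker, $U(2)$, case still yields both surjectivity and the single-class conclusion. Everything else is local bookkeeping or a formal consequence.
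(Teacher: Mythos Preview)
The paper does not supply a proof of this theorem: it is quoted with attribution to Kneser and to Nikulin (Theorems 1.13.2 and 1.14.2 in the latter) and is used thereafter as a black box. So there is nothing in the paper to compare your argument against. For what it is worth, your sketch tracks the arguments in those cited sources quite closely --- the existence and surjectivity portions via Nikulin's primitive-embedding formalism, and the single-class assertion via the Kneser--Eichler spinor-genus machinery and strong approximation --- and your identification of the $p=2$ local analysis as the place where genuine care is needed is accurate.
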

Under the conditions of Lemma \ref{2reflem}, Algorithm \ref{non_iso_indef_alg} can be used to determine the equivalence of vectors under $\opn{SO}_{\Ac}^+(L)$.
\begin{lemma}\label{2reflem}
  In the notation of Algorithm \ref{non_iso_indef_alg}, if $K_1$ represents both $\pm 2$ then $v_1 \sim_{\opn{O}_{\Ac}(L)} v_2$ if and only if $v_1 \sim_{\opn{SO}_{\Ac}^+(L)} v_2$. 
\end{lemma}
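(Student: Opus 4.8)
The plan is to correct an element of $\opn{O}_{\Ac}(L)$ taking $v_1$ to $v_2$ by composing it with reflections in vectors of $K_1$ of square $\pm 2$. The point is that such a reflection fixes $v_1$, acts trivially on $D(L)$, and has a prescribed determinant and real spinor norm, so it can be used to kill the ``defect'' of a given $g$ without disturbing the class in $\opn{O}(D(L))$ or the image of $v_1$. One direction of the equivalence is immediate, since $\opn{SO}^+_{\Ac}(L)\subset\opn{O}_{\Ac}(L)$, so the work is in the converse.

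So, suppose $g\in\opn{O}_{\Ac}(L)$ with $gv_1=v_2$, and use the hypothesis that $K_1$ represents both $\pm 2$ to fix $r_+,r_-\in K_1$ with $r_+^2=2$ and $r_-^2=-2$. First I would verify that each $\sigma_{r_\pm}$ lies in $\widetilde{\opn{O}}(L)$ and fixes $v_1$: for every $x\in L^{\vee}$ one has $2(x,r_\pm)/r_\pm^2=\pm(x,r_\pm)\in\ZZ$ (because $r_\pm\in L$), so $\sigma_{r_\pm}(x)=x\mp(x,r_\pm)r_\pm$ lies in $L^{\vee}$ with $\sigma_{r_\pm}(x)-x\in L$; applying this to $x\in L$ shows $\sigma_{r_\pm}(L)=L$ (equality since $\sigma_{r_\pm}$ is an involution), and applied to all of $L^{\vee}$ it shows $\overline{\sigma_{r_\pm}}=\opn{id}$, hence $\sigma_{r_\pm}\in\widetilde{\opn{O}}(L)\subset\opn{O}_{\Ac}(L)$; moreover $r_\pm\in K_1=w_1^{\perp}$ with $w_1=c_1v_1$, $c_1\neq 0$, forces $\sigma_{r_\pm}(v_1)=v_1$. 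Next I would record the numerical data: $\det\sigma_{r_\pm}=-1$, and by \eqref{spinordef} $\opn{sn}_{\RR}(\sigma_{r_+})=-1$ while $\opn{sn}_{\RR}(\sigma_{r_-})=1$ in $\RR^{*}/(\RR^{*})^{2}\cong\{\pm1\}$. Since $(\det,\opn{sn}_{\RR})$ sends $\sigma_{r_+}\mapsto(-1,-1)$ and $\sigma_{r_-}\mapsto(-1,1)$, the homomorphism $\opn{O}(L)\to\{\pm1\}\times\{\pm1\}$, $x\mapsto(\det x,\opn{sn}_{\RR}(x))$, is already surjective on the subgroup generated by $\sigma_{r_+}$ and $\sigma_{r_-}$. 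Hence there is $u\in\{1,\sigma_{r_+},\sigma_{r_-},\sigma_{r_+}\sigma_{r_-}\}$, chosen according to the value of $(\det g,\opn{sn}_{\RR}g)$, with $\det(gu)=1$ and $\opn{sn}_{\RR}(gu)=1$. Then $h:=gu\in\opn{SO}^{+}(L)$; since $\overline{h}=\overline{g}\,\overline{u}=\overline{g}\in\Ac$ we get $h\in\opn{SO}^{+}_{\Ac}(L)$; and $u(v_1)=v_1$ gives $hv_1=gv_1=v_2$, i.e.\ $v_1\sim_{\opn{SO}^{+}_{\Ac}(L)}v_2$.

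I do not expect a genuine obstacle here: the argument is just the two elementary verifications above (integrality of $\pm(x,r_\pm)$ on $L^{\vee}$, and the spinor-norm values read off from \eqref{spinordef}) followed by the parity bookkeeping for the four cases of $(\det g,\opn{sn}_{\RR}g)$. The only subtlety worth flagging is that the hypothesis must supply vectors of $K_1$ of square \emph{exactly} $+2$ and $-2$, not merely values congruent to $\pm 2$: it is precisely the involution property $\sigma_{r_\pm}^2=\opn{id}$ forced by $r_\pm^2=\pm 2$ that guarantees $\sigma_{r_\pm}$ normalises $L$, so the sharpest form of the statement ties the conclusion to this representability condition.
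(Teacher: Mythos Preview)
Your proof is correct and follows essentially the same approach as the paper: both use reflections in $\pm 2$-vectors of $K_1$ to adjust the determinant and spinor norm of an element of $\opn{O}_{\Ac}(L)$ sending $v_1$ to $v_2$, exploiting that such reflections lie in $\widetilde{\opn{O}}(L)$ and fix $v_1$. The only differences are cosmetic: the paper phrases the correction as replacing $\psi$ by $\psi\circ\sigma_v$ inside Algorithm \ref{non_iso_indef_alg} and cites \cite{GHSK3} for $\sigma_v\in\widetilde{\opn{O}}(K_1)$, whereas you compose directly with $g$ in $\opn{O}(L)$ and verify $\sigma_{r_\pm}\in\widetilde{\opn{O}}(L)$ by hand; one small inaccuracy in your final paragraph is that every reflection is an involution regardless of the square --- what $r_\pm^2=\pm 2$ actually buys is that $2(x,r_\pm)/r_\pm^2=\pm(x,r_\pm)\in\ZZ$ for all $x\in L$, which is the integrality needed for $\sigma_{r_\pm}(L)\subset L$.
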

\begin{proof}
  Suppose $v \in K_1$ satisfies $v^2 = \pm 2$.
  By definition, $\opn{sn}_{\RR}(\sigma_v) = \mp 1$ and, by \cite[Proposition 3.1]{GHSK3},  $\sigma_v \in \widetilde{\opn{O}}(K_1)$.
  If necessary, as $\opn{det}(\sigma_v) = -1$, one can replace $\psi$ in Algorithm \ref{non_iso_indef_alg} by $\psi \circ \sigma_v$ so that $\varphi \op \psi$ in Algorithm \ref{non_iso_indef_alg} has the required determinant and spinor norm. 
\end{proof}
Theorem \ref{singclassthm} and Lemma \ref{indefrep2} can often be used to determine if a lattice represents $\pm 2$.
\begin{lemma}\label{indefrep2}
  Let $K$ be an indefinite lattice with discriminant form $q_K$ and signature $(t_+, t_-)$.
  If $S := \la \pm 2 \ra$, suppose $\delta$ is given by one of
\begin{enumerate}
\item $\delta= q_S \op (-q_K)$; 
\item $\delta=( (q_S \op (-q_K)) \mid \Gamma_{\gamma}^{\perp}) /\Gamma_{\gamma}$, where $\Gamma_{\gamma}$ is the pushout for an inclusion of subgroups $\gamma:q_S \rightarrow q_L$ compatible with the forms $q_S$ and $q_L$.
\end{enumerate}
If $K$ is unique in its genus and there exists a lattice of signature $(t_+, t_-)$ with discriminant form $-\delta$ then $S \subset K$.
\end{lemma}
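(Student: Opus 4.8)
The plan is to use the standard gluing/embedding machinery for discriminant forms (Nikulin, Theorem 1.15.1 and Proposition 1.15.1) to reduce the existence of a primitive embedding $S \hookrightarrow K$ to the existence of a lattice with a prescribed discriminant form, and then invoke Theorem \ref{singclassthm} together with the hypothesis that $K$ is unique in its genus to promote ``primitive embedding into some lattice in $\gen(K)$'' to ``primitive embedding into $K$ itself''. Since $S = \la \pm 2 \ra$ has rank $1$, any embedding $S \hookrightarrow K$ is automatically primitive provided $K$ has no vector of square $\pm 2$ dividing an element of square a proper multiple; but more directly, a vector $v \in K$ with $v^2 = \pm 2$ need not be primitive only if $v = mv'$ with $m \geq 2$, forcing $v'^2 = \pm 2/m^2$, impossible for $m \geq 2$ since $K$ is integral. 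So $S \subset K$ is equivalent to $S$ embedding into $K$ at all, and it suffices to produce a primitive embedding.

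First I would recall Nikulin's criterion: a primitive embedding of $S$ into an even lattice $K$ in a given genus is equivalent to the data of a subgroup $H_S \subset D(S)$, a subgroup $H_K \subset D(K)$, an isomorphism $\gamma : H_S \xrightarrow{\sim} H_K$ anti-isometric with respect to $q_S$ and $q_K$, and an even lattice $T$ (the orthogonal complement) of signature $(t_+ - t_+(S), t_- - t_-(S))$ — here signature $(t_+ -1, t_-)$ or $(t_+, t_- - 1)$ according to the sign of $2$ — whose discriminant form is exactly the form $\delta$ described in the two cases of the statement: case (1) is the ``trivial glue'' $H_S = 0$, giving $q_T = q_S \op (-q_K)$ wait — one must be careful with signs; the complement's discriminant form is $(-q_S) \op q_K$ up to overall sign, and the statement packages this as $\delta$ with the final existence condition asking for a lattice with form $-\delta$, which absorbs the sign bookkeeping. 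Case (2) is the general glue along $\gamma$, where $\Gamma_\gamma$ is the graph of $\gamma$ inside $q_S \op q_K$ (the pushout), $\Gamma_\gamma^\perp$ its orthogonal complement, and $\delta = (\,(q_S \op (-q_K)) \mid \Gamma_\gamma^\perp\,)/\Gamma_\gamma$ is the induced form on the complement lattice.

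The key steps, in order: (i) observe the primitivity reduction above, so it suffices to find a primitive embedding $S \hookrightarrow K'$ for some $K'$ in $\gen(K)$; (ii) by Nikulin's embedding theorem, such an embedding exists if and only if there is an even lattice $T$ of signature $(t_+(K) - t_+(S),\, t_-(K) - t_-(S))$ with $q_T \cong -\delta$ (in the appropriate normalization, $\delta$ being as defined in case (1) or case (2)); (iii) the hypothesis of the lemma directly grants the existence of a lattice of signature $(t_+, t_-)$ with discriminant form $-\delta$ — here one checks the signature is the complement signature, i.e. $(t_+, t_-)$ in the lemma's statement already refers to the complement; (iv) conclude that $S$ embeds primitively into some $K' \in \gen(K)$; (v) finally, since $K$ is unique in its genus, $K' \cong K$, hence $S \subset K$. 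Step (iv)$\Rightarrow$(v) is where uniqueness in the genus is essential — without it one would only get the embedding into a possibly non-isomorphic lattice in the genus.

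The main obstacle I anticipate is bookkeeping the signs and the precise form of $\delta$ so that it matches Nikulin's conventions exactly: the orthogonal complement $T = S^\perp \subset K'$ satisfies $q_T \cong (-q_S \op q_K)$ only after the glue $\Gamma_\gamma$ is divided out, and getting the overall sign right (whether one needs a lattice with $q = \delta$ or $q = -\delta$, and how that interacts with $S = \la 2 \ra$ versus $\la -2 \ra$) requires care. A secondary subtlety is verifying that the glue subgroup data in case (2) — the anti-isometry $\gamma : q_S \to q_L$ (here one reads $q_L$ as $q_K$, or whatever ambient form is intended) — actually produces an even lattice $T$; evenness of $T$ is automatic from the even-lattice gluing formalism, but one should note that the existence statement for ``a lattice of signature $(t_+,t_-)$ with discriminant form $-\delta$'' is meant to be verified in practice via Theorem \ref{singclassthm}, which is exactly the point of the remark preceding the lemma. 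Once the sign conventions are pinned down, the proof is a direct citation of Nikulin's Proposition 1.15.1 and Theorem 1.15.1 combined with the genus-uniqueness hypothesis.
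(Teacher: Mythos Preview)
Your proposal is correct and takes essentially the same approach as the paper: the paper's proof reads in its entirety ``Immediate from Proposition~1.15.1 of \cite{Nikulin} and Theorem~\ref{singclassthm}'', which is exactly your strategy of using Nikulin's primitive-embedding criterion to produce $S \hookrightarrow K'$ for some $K'$ in $\gen(K)$ and then invoking genus uniqueness to get $K' \cong K$. Your additional observation that a vector of square $\pm 2$ in an even lattice is automatically primitive, and your caution about the sign and signature bookkeeping, are reasonable elaborations the paper omits.
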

\begin{proof}
Immediate from Proposition 1.15.1 of \cite{Nikulin} and Theorem \ref{singclassthm}.
\end{proof}
\begin{example}\label{alg2ex}
  Let $L = U \op A_3$ and suppose $v_1, v_2 \in  L \otimes \QQ$ are given by $v_1 = (1,-1,0,0,0)$ and $v_2 = (1,0,1,0,0)$ where $G(U)$ and $G(A_3)$ are as in Example \ref{nonisoorbit}.
  If $\Gamma = \widetilde{\opn{SO}}^+(L)$ then $v_1 \sim_{\Gamma} v_2$.
\end{example}
\begin{proof}
  We apply Algorithm \ref{non_iso_indef_alg}.
  As $L$ is of signature $(2,5)$ and $v_1^2 > 0 $ then $v_1^{\perp}$ is indefinite. 
  By Theorem \ref{singclassthm}, $L$ is unique in its genus and $\opn{O}(L) \rightarrow \opn{O}(D(L))$ is surjective.
  For $i=1,2$, we have $w_i^2=-2$, $w_i=v_i$, $c_i=1$ and $\alpha_i=1$.
  Bases for $K_i$ are given by
  \begin{align*}
    K_1 =
    \bpm
    1 & 0 & 0 & 0 \\
    1 & 0 & 0 & 0 \\
    0 & 1 & 0 & 0 \\
    0 & 0 & 1 & 0 \\
    0 & 0 & 0 & 1 
    \epm
    && \text{and} &&
    K_2 =
    \bpm
    1 & 0 & 0 & 0 \\
    0 & 1 & 0 & 0 \\
    0 & 0 & 1 & 0 \\
    0 & 1 & -2 & 0 \\
    0 & 0 & 0 & 1 
    \epm,
  \end{align*}
  with Gram matrices
  \begin{align*}
    G(K_1) =
    \bpm
    2 & 0 & 0 & 0 \\
    0 & -2 & -1 & 0 \\
    0 & -1 & -2 & -1 \\
    0 & 0 & -1 & -2
    \epm
    && \text{and} &&
    G(K_2) =
    \bpm
    0 & 1 & 0 & 0 \\
    1 & -2 & 3 & -1 \\
    0 & 3 & -6 & 2 \\
    0 & -1 & 2 & -2
    \epm.
  \end{align*}
  We calculate $D(K_1) \cong C_1 \op C_1 \op C_2 \op C_4 \cong D(K_2)$, implying $K_1 \cong K_2$ by Theorem \ref{singclassthm}.
  As
  \begin{align*}
    Q(G(K_1)) =
    \bpm
    0 & -1 & 3 & 2 \\
    -1 & 0 & 2 & 1 \\
    1 & 0 & -2 & -2 \\
    -1 & -1 & 4 & 3
    \epm
    && \text{and} &&
    Q(G(K_2)) =
    \bpm
    1 & 1 & 1 & 0 \\
    1 & 0 & 0 & 0 \\
    0 & 0 & -1 & -1 \\
    -1 & 0 & -2 & -3
    \epm,
  \end{align*}
  if $x_1, x_2, y_1, y_2 \in L \otimes \QQ$ are defined by  
  $x_1 := \frac{1}{2}(3,3,2,-2,4)$,
  $x_2 := \frac{1}{4}(2,2,1,-2,3)$,
  $y_1 := \frac{1}{2}(1,0,-1,2,-2)$ and
  $y_2 := \frac{1}{4}(0,0,-1,2,-3)$ then $D(K_1) \cong \la x_1, x_2 \ra \bmod{L}$ and $D(K_2) \cong \la y_1, y_2 \ra \bmod{L}$ (we ignore the $C_1$ terms for brevity).
  Therefore, 
  \begin{align*}
    q_{K_1}(a,b) = -\frac{3a^2}{2}  - \frac{b^2}{4} - ab \bmod{2 \ZZ}
    && \text{and} &&
    q_{K_2}(a,b) = -\frac{3a^2}{2} - \frac{3b^2}{4} \bmod{2 \ZZ}
  \end{align*}
  where $(a,b) \in C_2 \op C_4$.
  As 
  \begin{align*}
    \lambda_1
    =
    \bpm
    1 & -1 & 0 & 0 & 0 \\
    1 & 1 & 0 & -1 & -2 \\
    1 & 1 & 1 & -2 & -3 \\
    1 & 1 & 2 & 0 & -2 \\
    0 & 0 & 4 & -4 & 0
    \epm
    && \text{and} &&
    \lambda_2
    =
    \bpm
    0 & -1 & 2 & 1 & 0 \\
    0 & 1 & 0 & 0 & 0 \\
    1 & 0 & -1 & -2 & -1 \\
    0 & -1 & 0 & 3 & 2 \\
    0 & 0 & 0 & 4 & -4
    \epm 
  \end{align*}
  then $H_1 \cong H_2 \cong \la (1,0) \ra \subset C_2 \op C_4 \cong D(K_1) \cong D(K_2)$.
  We verify condition \ref{NIIA5} by considering $P(G(L)) \theta P(G(L))^{-1}$ where
  \beq
  P(G(L)) =
  \bpm
  1 & 0 & 0 & 0 & 0 \\
  0 & 1 & 0 & 0 & 0 \\
  0 & 0 & 1 & 0 & 0 \\
  0 & 0 & 0 & 1 & 0 \\
  0 & 0 & -1 & -2 & 1
  \epm.
  \eeq
We find that if $\overline{\varphi}=(1)$ and
  \beq
  \overline{\psi} =
  \bpm
  1 & 0 & 0 & 0 \\
  0 & 1 & 0 & 0 \\
  0 & 0 & 1 & 1 \\
  0 & 0 & 0 & 3
  \epm
  \eeq
  then $\theta$ acts trivially on $D(L)$.
Therefore, $v_1 \sim_{\opn{O}_{\Ac}(L)} v_2$ and, by Lemma \ref{2reflem}, $v_1 \sim_{\Gamma} v_2$.
\end{proof}
 \section{Algorithms for Tits' buildings and isotropic vectors}\label{BoundarySection}
We now explain how to calculate Tits' buildings and orbits of isotropic vectors  for a group $G_1 \subset \opn{O}^+(L \otimes \QQ)$ where $L$ is a lattice of signature $(2,n)$.
We will assume throughout that $\widetilde{\opn{SO}}^+(L) \subset G_1$.
\begin{defn}\label{TitsBuildingDefn}
  Let $L$ be a lattice of signature $(2,n)$.
  For a group $G \subset \opn{O}^+(L \otimes \QQ)$, 
  let $\pc$ and $\cc$ denote the $G$-orbits of totally isotropic subspaces of dimension 1 and 2 in $L \otimes \QQ$, respectively.
  Then the \textbf{Tits' building} $\bc(G) = (\nc, \ec) $ of $G$ is the bipartite graph with node set $\nc:=\pc \sqcup \cc$ and edge set $\ec$, where an edge is drawn between $[\Pi] \in \cc$ and $[\ell] \in \pc$ if and only if $g \ell \in \Pi$ for some $g \in G$.
\end{defn}
When drawing Tits' buildings, we adopt the convention that nodes in $\pc$ will be coloured black and nodes in $\Cc$ will be coloured white.
Nodes may or may not be numbered.
We will typically use $\ell$ to denote isotropic lines and $\Pi$ to denote isotropic planes.
We note that Definition \ref{TitsBuildingDefn} has an equivalent formulation with `primitive totally isotropic sublattice of $L$' replacing `totally isotropic subspace of $L \otimes \QQ$'.
\begin{algo}\label{TitsAlgo}
  Let $G_1 \subset G_2 \subset \opn{O}^+(L \otimes \QQ)$ be an inclusion of groups such that $\vert G_2 : G_1 \vert < \infty$.
  Suppose $\bc(G_2) = (\nc_2, \ec_2)$ where $\nc_2 = \pc_2 \sqcup \cc_2$, $\pc_2 = \{[\ell_i] \}$ and $\cc_2 = \{ [\Pi_i] \}$.
  Then the Tits' building $\bc(\nc_1, \ec_1)$ (where $\nc_1 = \pc_1 \sqcup \cc_1$) can be calculated from $\bc(G_2)$ as follows.
  \begin{enumerate}
  \item \textbf{Let}
    $\ec_1 := \emptyset$,
    $\pc_1 := \emptyset$,
    $\cc_1 := \emptyset$. \label{init}
  \item \textbf{Let} $\gc:=\{g_i\}$ be a transversal for $G_2/G_1$.
  \item \label{nodes} \textbf{For} $[E] \in \nc_2$
    \begin{enumerate}
    \item \textbf{Let} $\hc$ be a set of representatives for $\gc/\sim$ where, if $\lc_j$ is a transversal for
\beq
\opn{Stab}_{G_1}(q_jE) \backslash \opn{Stab}_{G_2}(q_j E),
\eeq
then $g_i \sim g_j$ if and only if $l_k g_j g_i^{-1} \in G_1$ for $l_k \in \lc_j$.
    \item \textbf{If} $E \in \pc_2$
     \textbf{let} $\pc_1:=\pc_1 \cup \hc.[E]$
     \textbf{else}
     \textbf{let} $\cc_1 := \cc_1 \cup \hc.[E]$.
    \end{enumerate}
  \item \label{edges} \textbf{For} $([\ell], [\Pi]) \in \pc_1 \times \cc_1$
    \begin{enumerate}
    \item \textbf{Let} $X$ be a finite set of representatives for all lines in $\Pi$ up to equivalence in $\opn{Stab}_{G_2}(\Pi)$.
    \item \textbf{Let} $\rho: \opn{Stab}_{G_2}(\Pi) \rightarrow \opn{GL}(\Pi)$ be the restriction homomorphism;
      \textbf{let} $\Gamma \subset \opn{Stab}_{G_1}(\Pi)$ be such that $\vert \rho(\Gamma) : \opn{Im}(\rho) \vert < \infty$;
      and \textbf{let} $\jc \subset \opn{Im}(\rho)$ be a finite set containing representatives for all cosets $\rho(\Gamma) \backslash \opn{Im}(\rho)$.
    \item \textbf{Let} $Y := \jc .X$.
    \item \textbf{Let} $\uc$ be a transversal for $\opn{Stab}_{G_1}(\ell) \backslash \opn{Stab}_{G_2}(\ell)$.
    \item \textbf{If}, for some $y \in Y$, both
      \begin{enumerate}
      \item there exists $\tau(y, \ell) \in G_1$ such that $\tau(y, \ell)y = \ell$
      \item $u \tau(y, \ell) \in G_1$ for some $u \in \uc$
      \end{enumerate}
      \textbf{then} add the edge $[\ell]$---$[\Pi]$ to $\ec_1$. 
    \end{enumerate}
  \item \textbf{Return} $(\nc_1, \ec_1)$.
  \end{enumerate}
\end{algo}
\begin{proof}
  In \ref{nodes}, we calculate $\nc_1$.
  The set $\gc.\pc_2$ contains representatives for all lines in $L \otimes \QQ$ up to $G_1$-equivalence, which we refine by identifying equivalent classes.
  If $[\ell_i], [\ell_j] \in \pc_2$ and $\ell_i \not \sim_{G_2} \ell_j$ then $\ell_i \not \sim_{G_1} \ell_j$ and so it suffices to establish conditions for $g g_i \ell_i = g_j \ell_i$ where $g_i, g_j \in \gc$ and $g \in G_1$.
  By an elementary calculation, the condition $g_j^{-1} g g_i \ell_i = \ell_i$ for some $g \in G_1$ is equivalent to $G_1 \cap \opn{Stab}_{G_2}(g_j \ell_i) g_j g_i^{-1} \neq \emptyset$, which is equivalent to $l_k g_j g_i^{-1} \in G_1$ for some $l_k \in \opn{Stab}_{G_1}(g_j \ell_i) \backslash \opn{Stab}_{G_2}(g_j \ell_i)$.
  The case of $\cc_1$ follows by an identical argument.
We calculate $\ec_1$ in \ref{edges}.
  As $Y$ contains representatives for all $\opn{Stab}_{G_1}(\Pi)$-orbits of lines in $\Pi$, we add the edge $[\ell]$---$[\Pi]$ to $\ec_1$ if and only if $\ell = g y_i$ for some $g \in G_1$ and $y_i \in Y$ or, equivalently, if $\ell = g \tau(y_i, \ell)^{-1} \ell$ for some $\tau(y_i, \ell) \in G_2$ where $\tau(y_i, \ell): y_i \mapsto \ell$.
  As this condition is equivalent to $G_1 \cap \opn{Stab}_{G_2}(\ell) \tau(y_i, \ell) \neq \emptyset$, the result follows.  
\end{proof}
\noindent \textbf{Remark} 
\begin{enumerate}
\item One can calculate  $G_1$-orbits of isotropic vectors in $L$ by performing steps \ref{init}-\ref{nodes} in Algorithm \ref{TitsAlgo} with $\pc_2$ replacing $\nc_2$ in \ref{nodes}.
\item Given a series of groups 
$G_1 \subset \ldots \subset G_m \subset \opn{O}^+(L \otimes \QQ)$,
  it is typically faster to calculate $\bc(G_1)$ by calculating each of the intermediate buildings $\bc(G_i)$ than working directly with $G_1 \subset G_m$.
\end{enumerate}
\subsection{Maximal lattices}
We now show that if $G_1 \subset \opn{O}^+(L)$ then one can always take $G_2 = \opn{O}^+(L')$ in Algorithm \ref{TitsAlgo}, where $L \subset L'$ is a maximal overlattice of $L$.
In such a case, $\bc(G_2)$ can be described using the results of  Attwell-Duval \cite{Attwell-Duval-Number, Attwell-Duval, Attwell-Duval-Topological}.
\begin{defn} A lattice $L'$ is said to be \textbf{maximal} if $D(L')$ contains no non-trivial totally isotropic subgroup
  (where a subgroup $H$ is said to be totally isotropic if $q_L \vert H \equiv 0 \bmod{ 2 \ZZ}$).
  An overlattice $L \subset L'$ with $L'$ maximal is said to be a \textbf{maximal overlattice}.
\end{defn}
\begin{lemma}\label{maximal-overlattice-lemma}
  Every lattice $L$ admits a maximal overlattice $L \subset L'$.
\end{lemma}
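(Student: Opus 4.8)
The plan is to construct $L'$ by a finite iteration, at each stage enlarging the current lattice along a non-trivial isotropic subgroup of its discriminant form until no such subgroup remains. First I would recall the standard correspondence: overlattices $L \subset M$ with $M \subset L^\vee$ are in bijection with isotropic subgroups $H \subset D(L)$ (with respect to $q_L$), via $M = \{ x \in L^\vee \mid x + L \in H \}$, and that under this correspondence $D(M) \cong H^\perp / H$ with the induced form, where $H^\perp$ is the orthogonal complement of $H$ in $D(L)$ with respect to the bilinear form associated to $q_L$. This is essentially Nikulin's Proposition 1.4.1 (which the paper is entitled to cite), so I would just state it and refer to \cite{Nikulin}.

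Next I would set up the induction. If $D(L)$ has no non-trivial isotropic subgroup, then $L$ is already maximal and we are done. Otherwise, pick any non-trivial isotropic subgroup $H_1 \subset D(L)$ and let $L_1$ be the corresponding overlattice; then $D(L_1) \cong H_1^\perp/H_1$. The key quantitative point is that $|D(L_1)| = |D(L)| / |H_1|^2 < |D(L)|$, since an isotropic subgroup satisfies $H_1 \subset H_1^\perp$ and $|H_1^\perp| = |D(L)|/|H_1|$. Repeating, we get a strictly increasing chain $L \subset L_1 \subset L_2 \subset \cdots$ with $|D(L_{k})|$ strictly decreasing, hence the process terminates after finitely many steps at a lattice $L'$ with $D(L')$ admitting no non-trivial isotropic subgroup, i.e. $L'$ is maximal. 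Since each $L_k$ is an even integral lattice (an overlattice of an even lattice inside its dual on which the $\QQ$-valued form restricts to a $\ZZ$-valued, in fact $2\ZZ$-valued on diagonal, form — which is exactly the isotropy condition on $H_k$), $L'$ is a genuine lattice in the sense of the paper.

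The only mild subtlety — and the step I would be most careful about — is checking that the overlattice $L_k$ produced at each stage is again even (so that "lattice" in the paper's sense is preserved) and that the discriminant form of the final $L'$ really has no isotropic subgroup, rather than merely no isotropic subgroup coming from the particular chain chosen. The first is handled precisely by the requirement that $H_k$ be isotropic for $q_{L_{k-1}}$, not merely for the bilinear form; the second is immediate because the terminal condition of the iteration is exactly "$D(L')$ has no non-trivial totally isotropic subgroup", which is the definition of maximal. I would also remark that the resulting $L'$ need not be unique, but existence is all that is claimed. This is genuinely routine, so I would keep the write-up short: state the Nikulin correspondence, do the order count, invoke finiteness, and conclude.
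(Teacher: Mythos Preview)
Your proposal is correct and follows essentially the same approach as the paper: invoke Nikulin's bijection between even overlattices and isotropic subgroups of $D(L)$, then iterate until no non-trivial isotropic subgroup remains, using the identification $q_{L'}\cong H^{\perp}/H$ to see that the terminal lattice is maximal. The only cosmetic difference is that the paper phrases the iteration as enlarging a single isotropic subgroup $H\subset D(L)$ one element at a time, whereas you pass to successive overlattices $L\subset L_1\subset L_2\subset\cdots$ and track termination via the strict decrease of $\lvert D(L_k)\rvert$; these are two descriptions of the same process.
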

\begin{proof}
  As explained in \S4 of \cite{Nikulin}, overlattices $L \subset L'$ are in bijection with isotropic subgroups $H \subset D(L)$.
  We enlarge the initial set $H := \{0 \}$ by adding successive  isotropic elements in $H^{\perp}$ until $H^{\perp}/H$ contains no non-trivial isotropic subgroup.
  As $q_{L'} \cong H^{\perp}/H$ \cite{Nikulin} then $L'$ is maximal.
\end{proof}
\begin{defn}
  A maximal lattice $L'$ of signature $(2,n)$ is said to be \textbf{split} if
  \begin{equation}\label{maximalsplitting}
  L' \cong 2U \op L_0'
  \end{equation}
  for a lattice $L_0'$.
  We let $\Sc(L_0') = \{x_i\}_{i=1}^{n+2}$ denote a $\ZZ$-basis of $L'$ such that $\{x_1, x_2\}$, $\{x_3, x_3 \}$ are canonical bases for the two copies of $U$ in \eqref{maximalsplitting} and $\{x_i\}_{i=5}^{n+2}$ is a basis for $L_0'$.
  We write $\Sc(L)$ to denote $\Sc(L_0')$ for an arbitrary choice of $L_0'$ satisfying \eqref{maximalsplitting}.
\end{defn}
It is known that if $L'$ is maximal of signature $(2,n)$  then $L'$ splits whenever $n \geq 5$ \cite{Attwell-Duval}.
Split maximal lattices are also  very common when $n < 5$, as can be established from \cite[Corollary 1.13.5]{Nikulin}.
\\
\\
\noindent \textbf{Assumption}  
From now on, we will assume that all maximal lattices are of signature $(2,n)$ and split.
\\
\\
As explained in \cite[\S5]{Attwell-Duval}, a totally isotropic sublattice $E$ of a maximal lattice $L'$ defines a $\ZZ$-basis $\Sc(E)  = \{x_i  \}$ of $L'$ where, if  
$E = \la e_1, e_2 \ra$ then
$\{x_1, x_2 \}$,
$\{x_3, x_4 \}$
are canonical bases for orthogonal copies of $U$ and $x_1 = e_1$ and $x_2=e_3$;
if $E = \la e_1 \ra$ then $x_1=e_1$ \cite{Attwell-Duval}.
Furthermore, the totally isotropic sublattices $E_1$, $E_2 \subset L'$ belong to the same $\opn{O}^+(L')$-orbit if and only if $E_1^{\perp}/E_1 \cong E_2^{\perp}/E_2$ \cite{Attwell-Duval}.
Rephrasing the results of \cite{Attwell-Duval}, we obtain Algorithm \ref{MaximalAlgo} and Figure \ref{MaximalBuilding}.
\begin{algo}\label{MaximalAlgo}
  If $L'$ is a maximal lattice of signature $(2, n)$ with $n \geq 2$ then $\bc(\old)$ (illustrated in Figure \ref{MaximalBuilding}) can be calculated as follows.
  \begin{enumerate}
  \item \textbf{Let} $\cc:=\emptyset$ and \textbf{let} $\pc:=\{e\}$ where $e \in L'$ is a primitive isotropic vector.
  \item Calculate $\opn{gen}(0, n-2, D(L))$ (e.g. by \cite{largeclass}).
  \item \textbf{For} each class $L_0$ in $\opn{gen}(0, n-2, D(L))$
    \begin{enumerate}
    \item \textbf{Let} $\{x_i\}_{i=1}^{2+n}:=\Sc(L_0)$ and \textbf{let} $\Pi := \la x_1, x_3 \ra$.
    \item \textbf{Let} $\cc:=\cc \cup [\Pi]$.
    \end{enumerate}
  \item \textbf{For} $[\Pi] \in \cc$
    \textbf{let} $\ec:=\ec \cup \{ [e] \text{---}[\Pi]\}$.
\item \textbf{Return} $\bc(\opn{O}^+(L')) := (\pc \sqcup \cc, \ec)$.
  \end{enumerate}
\end{algo}
\begin{figure}
  \centering
  \begin{tikzpicture}

    \draw
    
    (0,0) --  (-4,-4) 
    (0,0) -- (-2,-4)
    (0,0) -- (2,-4) 
    (0,0) -- (4,-4) 
    
    (0,0.1) node[above]{$0$}
    (-4,-4.1) node[below]{$0$}
    (-2,-4.1) node[below]{$1$}
    (0, -4.1) node{$\ldots$}
    (2,-4.1) node[below]{${m-1}$}
    (4,-4.1) node[below]{$m$};
    \draw[black, fill=black] (0,0) circle (0.1cm);

        \filldraw[fill=white]
(-4,-4) circle (0.1)
    (-2,-4) circle (0.1)
    (2,-4) circle (0.1)
    (4,-4) circle (0.1);

  \end{tikzpicture}
  \caption{$\bc(\old)$ where $L'$ is maximal}\label{MaximalBuilding}
\end{figure}
\subsection{Generators and cosets}
We now exhibit generators for $G_2$, $\opn{Stab}_{G_2}(\ell)$ and $\opn{Stab}_{G_2}(\Pi)$ where $G_2 = \opn{O}^+(L')$ and $L'$ is a maximal lattice.
These generators can be used to calculate representatives for each of the cosets in Algorithm \ref{TitsAlgo}. 
\begin{defn}[{\hspace{1sp}\cite{Eichler, abelianisation}}]
  Let $L=2U \op L_0$ be a lattice of signature $(2,n)$ and, for primitive isotropic $e \in L$, let $a \in e^{\perp}$.
  Then there exists an element $t(e,a) \in \widetilde{\opn{SO}}^+(L)$, known as an \textbf{Eichler transvection}, defined by
  \begin{equation}\label{eichlerdefeq}
  t(e,a): v \mapsto v - (a,v)e + (e,v)a - \frac{1}{2}(a,a)(e,v)e,
  \end{equation}
where $v \in L$.
\end{defn}
\begin{lemma}[{\hspace{1sp}\cite[Proposition 3.3]{abelianisation}}]\label{2ULgensLemma}
  Suppose $L=U \op L_1$ is a lattice where $L_1=U \op L_0$.
  If $\{x_i\}_{i=1}^{n+2}$ is a $\ZZ$-basis for $L$ such that   $\{x_1, x_2\}$ is a canonical basis for $U$ 
  then, 
    \beq
    \opn{O}^+(L) = \la t(x_1, v), t(x_2, v), \opn{O}^+(L_1) \mid v \in L_1 \ra.  
    \eeq
\end{lemma}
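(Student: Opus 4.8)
The plan is to prove the two inclusions separately. Writing $E$ for the right-hand side, the inclusion $E \subseteq \opn{O}^+(L)$ is routine: every Eichler transvection lies in $\widetilde{\opn{SO}}^+(L) \subseteq \opn{O}^+(L)$ directly from its definition, and an isometry $h$ of $L_1$ extended by the identity on $U$ keeps its spinor norm, because if $h = \sigma_{w_1} \cdots \sigma_{w_m}$ with $w_j \in L_1 \otimes \RR$ then its extension to $L$ is the product of the reflections of $L \otimes \RR$ in the very same vectors $w_j$, each of which fixes $U$ pointwise since $w_j \perp U$; hence $\opn{O}^+(L_1) \subseteq \opn{O}^+(L)$ and $E \subseteq \opn{O}^+(L)$. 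The work is in the reverse inclusion, which I would establish by reducing an arbitrary $g \in \opn{O}^+(L)$ to an element of $\opn{O}^+(L_1)$ in two stages.

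Stage one is to arrange $g x_1 = x_1$. The vector $w := g x_1$ is primitive isotropic, and since the divisor is $\opn{O}(L)$-invariant and $\opn{div}(x_1) = 1$ (because $(x_1, x_2) = 1$), $w$ has divisor $1$; writing $w = a x_1 + b x_2 + w_1$ with $w_1 \in L_1$, isotropy gives $2ab + w_1^2 = 0$ and the divisor condition says that $a$, $b$ and the entries of $\hat{w_1}$ have no common factor. Feeding $w$ into \eqref{eichlerdefeq} one finds that $t(x_1, v)$ sends $w$ to $\bigl(a - (v, w_1) - \tfrac12 b v^2\bigr) x_1 + b x_2 + (w_1 + b v)$ and $t(x_2, v)$ sends $w$ to $a x_1 + \bigl(b - (v, w_1) - \tfrac12 a v^2\bigr) x_2 + (w_1 + a v)$, so these transvections translate the $L_1$-component of $w$ by arbitrary multiples of $b$, respectively $a$, while fixing the other plane coordinate, and $\opn{O}^+(L_1)$ moves $w_1$ within $L_1$. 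This is precisely the configuration of Eichler's transitivity theorem for transvections; I would invoke it (using that $L_1 \supseteq U$ represents every even integer) to obtain $h_1 \in E$ with $h_1 w = x_1$, and then replace $g$ by $h_1 g$.

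Stage two assumes $g x_1 = x_1$. Then $(x_1, g x_2) = (g x_1, g x_2) = 1$ forces $g x_2 = \lambda x_1 + x_2 + u$ with $\lambda \in \ZZ$, $u \in L_1$, and $(g x_2)^2 = 0$ gives $2 \lambda + u^2 = 0$; a one-line check with \eqref{eichlerdefeq} shows $t(x_1, -u)$ fixes $x_1$ and carries $g x_2$ to $(\lambda + \tfrac12 u^2) x_1 + x_2 = x_2$. Hence $g' := t(x_1, -u) g$ fixes $x_1$ and $x_2$, so it is the identity on $U$ and preserves $U^{\perp} = L_1$. Since $g'$ is trivial on $U$, its restriction to $L_1$ has the same spinor norm as $g'$ (again by extending reflections of $L_1 \otimes \RR$ trivially over $U$), so $g'|_{L_1} \in \opn{O}^+(L_1) \subseteq E$; therefore $g = t(x_1, -u)^{-1} g' \in E$, and unwinding stage one gives $\opn{O}^+(L) \subseteq E$.

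The one step I expect to be genuinely nontrivial is the transitivity invoked in stage one: that the transvections attached to the single plane $\la x_1, x_2 \ra$ together with $\opn{O}^+(L_1)$ already move every primitive isotropic vector of divisor $1$ onto $x_1$. In practice this is a descent on a complexity measure of $(a, b, w_1)$ driven by the two translation formulas above and the relation $2ab + w_1^2 = 0$; the awkward part is the degenerate cases ($b = 0$, or $w = -x_1$), where one needs vectors of every even square in $L_1 \supseteq U$ so that suitable words in $t(x_1, \cdot)$, $t(x_2, \cdot)$ and $\opn{O}^+(L_1)$ supply the ``rotations'' of the plane $\la x_1, x_2 \ra$ that are not individually in $E$. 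This is classical, so I would either cite Eichler for it or reproduce the short descent; the rest of the argument is bookkeeping with \eqref{eichlerdefeq} and the spinor-norm formula \eqref{spinordef}.
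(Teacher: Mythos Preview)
The paper does not give its own proof of this lemma: it is stated with a bare citation to \cite[Proposition 3.3]{abelianisation} and no argument. Your two-stage reduction---first using Eichler transitivity (transvections $t(x_1,\cdot)$, $t(x_2,\cdot)$ together with $\opn{O}^+(L_1)$) to arrange $gx_1 = x_1$, then a single $t(x_1,-u)$ to normalise $gx_2$, leaving an element of $\opn{O}^+(L_1)$---is precisely the argument of the cited reference, and you are right that the only substantive step is the transitivity on primitive isotropic vectors of divisor~$1$; the rest is bookkeeping with \eqref{eichlerdefeq}.
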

\begin{lemma}\label{ParaDecompLemma}
  Let $L'$ be a maximal lattice.
  If $\ell \subset L'$ is an isotropic line, let $\{x_i\}_{i=1}^{n+2}:=\Sc(\ell)$ and $L_1':=\la x_i \mid i=3, \ldots, n \ra$.
  Then 
  \beq
  \opn{Stab}_{\old}(\ell) =
  \la t(x_1, v), \opn{O}^+(L_1') \mid v \in L_1' \ra.
  \eeq
\end{lemma}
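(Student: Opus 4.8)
The plan is to reduce the computation of $\opn{Stab}_{\old}(\ell)$ to the generator description of $\opn{O}^+$ supplied by Lemma \ref{2ULgensLemma}, exploiting the special shape of the basis $\Sc(\ell)$. Recall that since $L'$ is (split) maximal of signature $(2,n)$, it contains at least two orthogonal copies of $U$, and by the discussion preceding the statement the basis $\Sc(\ell) = \{x_i\}_{i=1}^{n+2}$ is chosen so that $\{x_1,x_2\}$ and $\{x_3,x_4\}$ are canonical bases for two orthogonal copies of $U$, with $x_1 = e$ spanning $\ell$. Write $L' = U \op L_1'$, where the first $U$ is $\la x_1, x_2 \ra$ and $L_1' = \la x_i \mid i \geq 3 \ra$; note $L_1' = U \op L_0'$ in the notation of Lemma \ref{2ULgensLemma}, so that lemma applies with $L = L'$, $L_1 = L_1'$.

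First I would prove the inclusion $\supseteq$: each listed generator stabilises $\ell$. For $g \in \opn{O}^+(L_1')$ this is immediate, since $g$ acts as the identity on $\la x_1, x_2 \ra \supset \ell$ (using Lemma \ref{stabext} to view $\opn{O}^+(L_1') \subset \opn{O}^+(L')$). For the Eichler transvection $t(x_1, v)$ with $v \in L_1' \subset x_1^{\perp}$, formula \eqref{eichlerdefeq} gives $t(x_1,v): x_1 \mapsto x_1 - (v,x_1)x_1 + (x_1,x_1)v - \tfrac12(v,v)(x_1,x_1)x_1 = x_1$ because $(v,x_1)=0$ and $(x_1,x_1)=0$; hence $t(x_1,v)$ fixes $\ell$ pointwise. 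So the right-hand group is contained in $\opn{Stab}_{\old}(\ell)$.

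The substance is the reverse inclusion $\subseteq$. Here I would start from Lemma \ref{2ULgensLemma}, which writes $\old = \la t(x_1,v), t(x_2,v), \opn{O}^+(L_1') \mid v \in L_1' \ra$, and analyse which products land in $\opn{Stab}_{\old}(\ell)$. Equivalently, I want to show that modulo the subgroup $N := \la t(x_1,v), \opn{O}^+(L_1') \ra$ the coset space $\old / N$ is parametrised exactly by the "position of $\ell$", i.e. that the orbit map $N\backslash\old \to \{\text{isotropic lines}\}$, $Ng \mapsto g^{-1}\ell$ (or $g\ell$) is injective on the relevant fibre. Concretely: take $g \in \opn{Stab}_{\old}(\ell)$; using the $t(x_2,v)$ generators one adjusts $g$ on the left by elements of $N$ to move $g\, x_1$ (or $g^{-1}x_1$) back to a standard multiple of $x_1$ — the transvections $t(x_2,v)$ are precisely the "translations in the $x_1$-direction of the cusp", and together with $\opn{O}^+(L_1')$ acting on $e^\perp/e \cong L_1'$ they generate the full stabiliser of the flag, while the extra $t(x_2,v)$'s are what one must eliminate. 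The cleanest route is: (i) show $\opn{Stab}_{\old}(\ell)$ surjects onto $\opn{O}^+(e^\perp/\la e\ra)$ via the natural map, with the listed generators already surjecting; (ii) identify the kernel of that map restricted to $\opn{Stab}_{\old}(\ell)$ with the group of Eichler transvections $t(x_1,v)$, $v \in L_1'$ (the unipotent radical of the parabolic), again using \eqref{eichlerdefeq} to see $t(x_1,v)$ acts trivially on $e^\perp/\la e\ra$ and that these exhaust the kernel by a direct matrix computation in the basis $\Sc(\ell)$; (iii) conclude by the exact-sequence/generation argument that $\opn{Stab}_{\old}(\ell)$ is generated by any lift of a generating set of $\opn{O}^+(L_1')$ together with the $t(x_1,v)$, which is the asserted list.

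The main obstacle I anticipate is step (ii)–(iii): controlling the unipotent radical and verifying that no element of $\opn{O}^+(L_1')$-type appears "twisted" by a $t(x_2,v)$ inside the stabiliser. This is a careful but routine linear-algebra computation with the block form of elements of $\opn{Stab}_{\old}(\ell)$ in the basis $\Sc(\ell)$ — writing $g$ as an upper-triangular block matrix with respect to the flag $\ell \subset \ell^\perp \subset L'$, reading off that the top-left scalar must be $\pm 1$ (and $+1$ after the spinor-norm and $\old$-orientation constraints together with composing with an element of $\opn{O}^+(L_1')$), and matching the off-diagonal block with the transvection formula \eqref{eichlerdefeq}. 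I would also need the standard fact that $\opn{O}^+(L_1')$ contains the isometry $x_3 \leftrightarrow x_4$ (negating appropriately) realising the $-1$ on $\ell$, so that no separate generator for that is needed; this is where the hypothesis that $L'$ is split maximal (hence $L_1'$ itself contains a hyperbolic plane) is used.
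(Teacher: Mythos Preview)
Your approach differs substantively from the paper's. The paper does not derive the parabolic (Levi--unipotent) decomposition from scratch; it quotes directly from \cite[p.30]{Attwell-Duval-Topological} the structural statement
\[
\opn{Stab}_{\opn{O}^+(L')}(\ell)\;\cong\;\opn{O}^+(L_1')\ltimes U(\ell),
\]
and the only computation carried out is the identification of the unipotent radical: the explicit matrices $g(\underline z)$ from that reference are matched with Eichler transvections via \eqref{eichlerdefeq} together with the additivity $t(x_1,a+b)=t(x_1,a)\,t(x_1,b)$, giving $g(\underline z)=t(x_1,\underline z)$. Your route --- filtering by the flag $\ell\subset\ell^{\perp}\subset L'$, computing kernel and image of $\opn{Stab}(\ell)\to\opn{O}(\ell^{\perp}/\ell)$, and lifting generators --- is in effect a reproof of the cited semidirect decomposition. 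That is a legitimate, more self-contained strategy, and your step (ii) coincides with the one computation the paper does perform. (Your opening idea of sieving words in the generators of Lemma~\ref{2ULgensLemma} is not really pursued and would be harder to make precise; the flag approach you settle on is the right one.)

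There is, however, a real gap in your treatment of the scalar action on $\ell$. Every element of $\langle t(x_1,v),\,\opn{O}^+(L_1')\rangle$ --- with $\opn{O}^+(L_1')$ extended by the identity on $\langle x_1,x_2\rangle$ as in Lemma~\ref{stabext} --- fixes the \emph{vector} $x_1$, not merely the line. Your proposed remedy, that the swap $x_3\leftrightarrow x_4$ in $\opn{O}^+(L_1')$ ``realises the $-1$ on $\ell$'', does not work: that isometry acts trivially on $\langle x_1,x_2\rangle$ and in particular fixes $x_1$. So you have not accounted for elements of $\opn{Stab}_{\old}(\ell)$ sending $x_1\mapsto -x_1$ (e.g.\ $-I\in\opn{O}^+(L')$, which has real spinor norm $+1$ in signature $(2,n)$). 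In the paper this issue is absorbed into the cited decomposition (where the embedding of the Levi factor is specified in \cite{Attwell-Duval-Topological}); in your from-scratch argument it must be handled explicitly, and the hyperbolic swap in $L_1'$ is not the mechanism that does it.
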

\begin{proof}
  By \cite[p.30]{Attwell-Duval-Topological},  
  \beq
  \opn{Stab}_{\opn{O}^+(L)}(\ell) \cong \opn{O}^+(L_1) \ltimes U(\ell),
  \eeq
  where the unipotent radical $U(\ell) \subset \opn{Stab}_{\opn{O}^+(L)}(\ell)$ is generated by matrices of the form
  \begin{equation}\label{ZU1}
    g(\underline{z})=
    \left(
  \begin{array}{c c | c c c }
    1 & y_{1,2} & y_{1,3} & \ldots & y_{1,n+2} \\
    0 & 1 & 0 & \ldots & 0 \\
    \hline
    0 & z_1 & \\
    \vdots & \vdots & & \textbf{\huge{$I$}} & \\
    0 & z_n & 
  \end{array}
  \right) .
  \end{equation}
  The terms $y_{1, i}$ in \eqref{ZU1} satisfy 
  \begin{center}
    \begin{tabular}{c c c}
      $y_{1,2} = -(\underline{z}, \underline{z})$
      & and &
      $y_{1,i+2} = - (\underline{z}, x_i)$
      \end{tabular}
  \end{center}
  where $\underline{z} = \sum_{i=1}^n z_i x_{i+2}$ and $(z_1, \ldots, z_n) \in \ZZ^n$ \cite{Attwell-Duval-Topological}.
  By \eqref{eichlerdefeq},  $t(x_1, a+b) = t(x_1, a) t(x_1,b)$ for all $a,b \in L_1$, implying $g(\underline{z}) = t(x_1, \underline{z})$, from which the result follows.
\end{proof}
\begin{lemma}\label{stab-E-o(L')-gens}
  If $\Pi \subset L'$ is a totally isotropic plane and $\Sc(\Pi) = \{x_i\}_{i=1}^{n+2}$, let
  \beq
  L_0':=\la x_i \mid i=5, \ldots, n \ra.
  \eeq
  Then 
  \beq
  \opn{Stab}_{\opn{O}^+(L')}(\Pi) =
  \la
  t(x_1, x_4),
  t(x_2, x_3),
  t(x_1, x_3),
  t(x_1, v),
  t(x_3, v),
  \opn{O}^+(L_0') \mid v \in L_0'
  \ra.
   \eeq
\end{lemma}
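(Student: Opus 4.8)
The plan is to realise $\opn{Stab}_{\old}(\Pi)$ as the stabiliser of an isotropic line inside a stabiliser of a smaller lattice, and then apply Lemma \ref{ParaDecompLemma} twice — once to peel off each of the two generators of $\Pi$. Write $\Sc(\Pi) = \{x_i\}_{i=1}^{n+2}$ and $\Pi = \la x_1, x_3 \ra$, with $\{x_1,x_2\}$, $\{x_3,x_4\}$ canonical bases for two orthogonal copies of $U$ and $L_0' = \la x_i \mid i = 5,\ldots,n \ra$. First I would observe that $\opn{Stab}_{\old}(\Pi) = \opn{Stab}_{\old}(\la x_1 \ra) \cap \opn{Stab}_{\old}(\la x_3 \ra)$ (an element preserving the plane $\Pi$ must permute the two isotropic lines spanned by $x_1$ and $x_3$, but since it lies in $\old$ and the lines are distinguished by, e.g., their divisors or simply because we can restrict attention to the connected-component-preserving subgroup, it fixes each line setwise — this small point needs a line of justification).

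Next I would apply Lemma \ref{ParaDecompLemma} with $\ell = \la x_1 \ra$: setting $L_1' := \la x_i \mid i = 3,\ldots,n \ra$ (which is again a maximal lattice, of signature $(1,n-1)$ after removing one hyperbolic plane — or one works with $L_1' = U \op L_0'$ directly), we get
\beq
\opn{Stab}_{\old}(\la x_1 \ra) = \la t(x_1, v), \opn{O}^+(L_1') \mid v \in L_1' \ra.
\eeq
Now intersect with $\opn{Stab}_{\old}(\la x_3 \ra)$. The Eichler transvections $t(x_1, v)$ for $v \in L_1'$ already fix $x_1$; imposing that they also stabilise $\la x_3 \ra$ forces, via \eqref{eichlerdefeq}, the vector $v$ to satisfy $(v, x_3) = 0$, i.e. $v \in x_3^{\perp} \cap L_1' = \la x_3 \ra \op L_0' $ — wait, more precisely $v$ ranges over $\la x_1, x_3 \ra^{\perp}$-type combinations; the transvections that survive are generated by $t(x_1, x_3)$, $t(x_1, x_4)$ and $t(x_1, v)$ for $v \in L_0'$. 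The remaining factor $\opn{O}^+(L_1')$ must be intersected with $\opn{Stab}(\la x_3 \ra)$, and applying Lemma \ref{ParaDecompLemma} again inside $L_1'$ with $\ell = \la x_3 \ra$ gives $\opn{Stab}_{\opn{O}^+(L_1')}(\la x_3 \ra) = \la t(x_3, v), \opn{O}^+(L_0') \mid v \in L_0' \ra$, noting $\Sc(\la x_3 \ra)$ inside $L_1'$ uses the copy of $U$ spanned by $\{x_3, x_4\}$. Assembling, one obtains the claimed generating set, with $t(x_2, x_3)$ arising from the fact that the second hyperbolic plane's transvections $t(x_2, -)$ on $L$ restrict, under the $\Pi$-stabiliser condition, to those $v$ orthogonal to $x_1$ — so only $t(x_2, x_3)$ (and $t(x_2,x_4) = \mathrm{id}$ on the relevant span, or it is absorbed) survives; this bookkeeping of which transvections from the two $U$-blocks remain is where I would be most careful.

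The main obstacle I anticipate is precisely this last bookkeeping step: correctly determining, from the explicit unipotent-radical description in \eqref{ZU1} and the transvection identity $t(e, a+b) = t(e,a)t(e,b)$, exactly which products of Eichler transvections generate $U(\la x_1 \ra) \cap \opn{Stab}(\la x_3 \ra)$, and verifying that the list $t(x_1, x_4), t(x_2, x_3), t(x_1, x_3), t(x_1, v), t(x_3, v)$ (for $v \in L_0'$) is neither redundant nor deficient. The cleanest way to handle this is to use the semidirect-product structure $\opn{Stab}_{\old}(\Pi) \cong \opn{O}^+(L_0') \ltimes U(\Pi)$ analogous to the line case in \cite{Attwell-Duval-Topological}, identify $U(\Pi)$ explicitly as upper-triangular unipotent matrices preserving the flag $\la x_1 \ra \subset \Pi$, and then match each elementary unipotent generator with the corresponding Eichler transvection via \eqref{eichlerdefeq} — exactly as was done for a single line in the proof of Lemma \ref{ParaDecompLemma}. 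The Levi factor $\opn{O}^+(L_0')$ and the "cross" transvections $t(x_1,x_3)$, $t(x_1,x_4)$, $t(x_2,x_3)$ (which encode the $\opn{GL}_2$-action on $\Pi$ together with its unipotent interaction with $L_0'$) then account for the remaining generators, completing the proof.
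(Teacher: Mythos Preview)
Your first step is wrong, and the rest of the argument does not recover from it. You claim that $\opn{Stab}_{\old}(\Pi) = \opn{Stab}_{\old}(\la x_1 \ra) \cap \opn{Stab}_{\old}(\la x_3 \ra)$, justifying this by saying that an element preserving $\Pi$ ``must permute the two isotropic lines spanned by $x_1$ and $x_3$''. But $\Pi$ is \emph{totally} isotropic: every line in $\Pi$ is isotropic, not just two, and the stabiliser of $\Pi$ acts on $\Pi$ through a full copy of $\opn{SL}(2,\ZZ)$ (this is exactly what Lemma~\ref{X-Gamma-Lemma}(1) records). Concretely, applying \eqref{eichlerdefeq} one finds that $t(x_1,x_4)$ sends $x_3 \mapsto x_3 - x_1$ and $t(x_2,x_3)$ sends $x_1 \mapsto x_1 + x_3$; both lie in $\opn{Stab}_{\old}(\Pi)$ but neither fixes $\la x_3 \ra$ (resp.\ $\la x_1 \ra$). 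So the right-hand side of your proposed equality is a proper subgroup --- it is the stabiliser of the full flag $\la x_1 \ra \subset \Pi$, not of $\Pi$ alone --- and the iterated application of Lemma~\ref{ParaDecompLemma} will never produce $t(x_1,x_4)$ or $t(x_2,x_3)$.

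Your fallback suggestion, a decomposition $\opn{Stab}_{\old}(\Pi) \cong \opn{O}^+(L_0') \ltimes U(\Pi)$, has the same defect: the Levi factor of the $\Pi$-parabolic is $\opn{GL}(\Pi) \times \opn{O}(\Pi^{\perp}/\Pi)$, not just the orthogonal part, and $t(x_1,x_4), t(x_2,x_3)$ sit in that $\opn{GL}(\Pi)$ piece, not in the unipotent radical. The paper avoids this bookkeeping entirely: it observes that the Jacobi group $\Gamma^J(L')$ --- defined as the subgroup of $\opn{Stab}_{\old}(\Pi)$ acting trivially on $L_0'$ --- already has the listed transvections as a known generating set (citing \cite{abelianisation}), so that $\opn{Stab}_{\old}(\Pi)$ is generated by $\Gamma^J(L')$ together with $\opn{O}^+(L_0')$.
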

\begin{proof}
  As the Jacobi group $\Gamma^J(L')$ is the subgroup of $\opn{Stab}_{\opn{O}^+(L')}(\Pi)$ acting trivially on $L_0$ \cite{abelianisation}, the result follows by noting that   \cite[p.470]{abelianisation}
  \beq
  \Gamma^J(L')
  =
  \la 
  t(x_1, x_4),
  t(x_2, x_3),
  t(x_1, x_3), 
  t(x_1, v), 
  t(x_3, v) \mid 
   v \in L_0 \ra.
  \eeq
\end{proof}
\noindent \textbf{Remark} 
 One can often calculate generators for the groups
$\opn{O}^+(L_1)$ and $\opn{O}^+(L_0)$ in Lemmas \ref{2ULgensLemma}, \ref{ParaDecompLemma} and \ref{stab-E-o(L')-gens} by using \cite{Mertens, Vinberg, FinckePohst}.
Furthermore, if $\Pi \subset L$ is a totally isotropic sublattice, one can also use Lemma \ref{stab-E-o(L')-gens} to understand the boundary curves $C_{\Pi}$ in Theorem \ref{BailyBorelThm}.
By \cite{Brieskorn}, there exists a homomorphism
\beq
\pi:\opn{Stab}_{\Gamma}(E) \rightarrow \opn{SL}(2, \ZZ)  
\eeq
and, by \cite[\S2]{Scattone}\cite[\S2]{Kondo}, $C_{\Pi} \cong \HH^+ / \opn{Im} \pi$.
\begin{lemma}\label{fin-index-lemma}
  The inclusions
  \beq
  \begin{cases}
    \otl \subset \old \\
    \opn{Stab}_{\otl}(\ell) \subset \opn{Stab}_{\old}(\ell) \\
    \opn{Stab}_{\otl}(\Pi) \subset \opn{Stab}_{\old}(\Pi)
  \end{cases}
  \eeq
    are of finite index.
\end{lemma}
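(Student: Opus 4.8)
The plan is to deduce all three finite-index statements from a single source of finiteness, namely that $\otl$ has finite index in $\ol$ (hence in $\old$), together with the fact that the three stabilisers in question are the intersections of the ambient groups with the respective stabiliser subgroups. First I would recall that $[\old : \otl] < \infty$: this is standard, since $\otl$ is by definition the kernel of the homomorphism $\old \to \opn{O}(D(L'))$ (see \eqref{OODL} and the surrounding discussion), and $\opn{O}(D(L'))$ is a finite group because $D(L')$ is a finite abelian group. Thus the image of $\old$ in $\opn{O}(D(L'))$ is finite, so the kernel $\otl$ has finite index.

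Next I would handle the stabiliser cases uniformly. For $E$ either an isotropic line $\ell$ or a totally isotropic plane $\Pi$, one has
\beq
\opn{Stab}_{\otl}(E) = \otl \cap \opn{Stab}_{\old}(E).
\eeq
Since $\otl$ has finite index in $\old$, its intersection with any subgroup $H \subset \old$ has finite index in $H$: concretely, the inclusion $H \cap \otl \hookrightarrow H$ induces an injection of coset spaces $H/(H \cap \otl) \hookrightarrow \old/\otl$, so $[H : H \cap \otl] \le [\old : \otl] < \infty$. Applying this with $H = \opn{Stab}_{\old}(\ell)$ and $H = \opn{Stab}_{\old}(\Pi)$ gives the second and third inclusions, and applying it with $H = \old$ itself (or just noting it directly) gives the first.

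The only mild subtlety — and the step I would be most careful about — is making sure the notation is consistent: $\otl$ as used here should be the stable orthogonal group $\widetilde{\opn{O}}^+(L')$ of the maximal lattice $L'$, i.e. $\otla$ in the paper's notation with $\Ac$ trivial, and one must check that $\opn{Stab}_{\otl}(E)$ really does coincide with $\otl \cap \opn{Stab}_{\old}(E)$ rather than some a priori larger object; but this is immediate from the definition $\opn{Stab}_{\Gamma}(E) := \{ g \in \Gamma \mid gE = E\}$ for any group $\Gamma$ of isometries. No deep input is needed beyond the finiteness of $\opn{O}(D(L'))$ and the general group-theoretic fact that finite index passes to subgroups under intersection; the argument is essentially a two-line observation once that structure is in place.
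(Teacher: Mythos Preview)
Your argument has a genuine gap stemming from a misreading of the notation. In this paper $\otl$ denotes $\widetilde{\opn{O}}^+(L)$, the stable orthogonal group of the \emph{original} lattice $L$, not of the maximal overlattice $L'$ (check the macro definitions and the running hypotheses of \S\ref{BoundarySection}: we fix $L$ and then pass to a maximal overlattice $L \subset L'$). So the first inclusion in the lemma is $\widetilde{\opn{O}}^+(L) \subset \opn{O}^+(L')$, and your one-line argument ``$\otl$ is the kernel of $\old \to \opn{O}(D(L'))$'' proves instead that $\widetilde{\opn{O}}^+(L') \subset \opn{O}^+(L')$ has finite index, which is a much weaker (indeed trivial) statement. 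Note that $\widetilde{\opn{O}}^+(L)$ is \emph{not} defined as the kernel of any map out of $\opn{O}^+(L')$; it is the kernel of $\opn{O}^+(L) \to \opn{O}(D(L))$, and $\opn{O}^+(L)$ need not even sit inside $\opn{O}^+(L')$ in general (an isometry of $L$ may permute the isotropic subgroups of $D(L)$ and hence fail to preserve $L'$).

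What is actually needed is an argument producing a finite-index subgroup of $\opn{O}^+(L')$ that preserves $L$. The paper does this by the O'Grady/Kond\=o trick: take $M$ the exponent of $L'/L$, so that $ML' \subset L \subset L'$, and let $\opn{O}^+(L')$ act on the finite group $Q = L'/ML'$; the stabiliser of the subgroup $L/ML' \subset Q$ has finite index by orbit--stabiliser and lands in $\opn{O}^+(L)$. Combined with $[\opn{O}^+(L):\widetilde{\opn{O}}^+(L)] < \infty$ this gives the first inclusion. Your reduction of the second and third inclusions to the first via ``finite index is inherited under intersection with a subgroup'' is correct and is exactly what the paper means by ``the other cases follow by an identical argument'', but the first step needs the extra input above.
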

\begin{proof}
  The following argument is used in \cite{Kondo} for moduli spaces of K3 surfaces, where it is attributed to O'Grady (see also \cite{DawesFamily}).
  If $M$ is the exponent of the group $L'/L$ then
  \beq
  L'(M) \subset L \subset L' 
  \eeq
  and we let $Q:=L'/L'(M)$.
  If $I := L /L'(M) \subset Q$ then $\opn{Stab}_{\old}(I) \subset \ol$ and, by the Orbit-Stabiliser theorem, $\opn{Stab}_{\old}(I) \subset \old$, implying $\vert \otl : \old \vert < \infty$. 
 The other cases follow by an identical argument.
\end{proof}
\noindent \textbf{Remark} By Schreier's Lemma \cite[p.18]{Cameron}, Lemma \ref{2ULgensLemma} can be used to obtain a set of generators for the subgroups occurring in Lemma \ref{fin-index-lemma}.
\subsection{The element $\tau(x,y)$}
If $L$ is a split maximal lattice and $\opn{O}^+(L) \subset G_2 \subset \opn{O}^+(L \otimes \QQ)$, then the element $\tau(x,y)$ in Algorithm \ref{TitsAlgo} can be calculated using Algorithm \ref{EichlerAlg}.
\begin{algo}\label{EichlerAlg}
  Let $L$ be a maximal lattice such that $L=U \op L_1$ where $L_1 = U \op L_0$.
  If $x, y \in L$ are primitive and isotropic then there exists $\tau(x,y) \in \opn{O}^+(L)$ such that $\tau(x,y) x = y$.
  The element $\tau(x,y)$ can be calculated as follows.
  \begin{enumerate}
  \item \textbf{Let} \label{eichler1}
\begin{equation}\label{SL2emb}
    \theta:\opn{SL}(2, \ZZ) \times \opn{SL}(2, \ZZ) \rightarrow \opn{O}^+(L)
    \end{equation}
    be defined by
    \begin{center}
      \begin{tabular}{c c c  }
        $\theta(Z, I) =
        \bpm
        d & 0 & c & 0 \\
        0 & a & 0 & -b \\
        b & 0 & a & 0 \\
        0 & -c & 0 & d
        \epm \op I$
        & and &
        $\theta(I, Z)=
        \bpm
        d & 0 & 0 & -c \\
        0 & a & b & 0 \\
        0 & c & d & 0 \\
        -b & 0 & 0 & a 
        \epm
        \op I$
      \end{tabular}
    \end{center}
    for 
    \beq
    Z = \bpm a & b \\ c & d \epm \in \opn{SL}(2, \ZZ).
    \eeq
  \item \textbf{Let} $\iota: L \rightarrow M_2(\ZZ)$ be defined by by \label{eichler2}
    \beq
    \iota: (x_1, x_2, \ldots ) \mapsto
    \bpm
    x_3 & -x_2 \\
    x_1 & x_4
    \epm.
    \eeq
\item   \textbf{If}  $w \in L_1$ \textbf{then}  
    \label{eichler3}
    \begin{enumerate}
    \item[]  \textbf{let} $g(w):=I \in \opn{O}^+(L)$
    \item[] \textbf{else}, by calculating the Smith normal form, \textbf{let} $(A, B) \in \opn{SL}(2, \ZZ) \times \opn{SL}(2, \ZZ)$ be such that $\iota(w)$ is diagonal and \textbf{let} $g(w):=\theta(A, B)$.
    \end{enumerate}
  \item  \textbf{Let} $x':=g(x) x$ and $y':=g(y) y$. \label{eichler4}
  \item \textbf{Let} $u', v' \in L_1$ be such that $(x',u')=(y',v')=1$ (which can be calculated using the Euclidean algorithm). \label{eichler5}
  \item \textbf{Return} $\tau(x,y)$ where \label{eichler6}
\beq
\tau(x,y):=g(x) t(e, u') t(f, x'-f') t(e, -v') g(y)^{-1}.
\eeq
  \end{enumerate}
\end{algo}
\begin{proof}
  The algorithm essentially follows the proof of the Eichler criterion given in  \cite[Proposition 3.3]{abelianisation}: we have simply made a few details explicit.
  The homomorphism \eqref{SL2emb} in \ref{eichler2} is well known (e.g. \cite{Sterk}) and is obtained by taking coordinates $(w,x,y,z)$ for $2U$ and identifying
  \beq
  2U \ni (x_1,x_2,x_3,x_4) \mapsto
  \bpm 
  x_3 & - x_2 \\
  x_1 & x_4
  \epm \in M_2(\ZZ),
  \eeq
  where the quadratic form on $M_2(\ZZ)$ is given by $2 \opn{det}$.
  The action of $(A,B) \in \opn{SL}(2, \ZZ) \times \opn{SL}(2, \ZZ)$ is then given by
  \beq
  \opn{SL}(2, \ZZ) \times \opn{SL}(2, \ZZ) \ni (A, b) \mapsto A X B^{-1}
  \eeq
  for $X \in M_2(\ZZ)$.
  The elements $u'$ and $v'$ in \ref{eichler5} both exist as $x'$ and $y'$ are primitive and $L'$ is maximal, implying $u'^*$ and $v'^*$ modulo $L$ are of order 1, hence $\opn{div}(x') = \opn{div}(y')=1$.
\end{proof}
\subsection{The sets $X$, $Y$ and the group $\Gamma$}
We now show how to calculate the set $X$, $Y$ and the group $\Gamma$ in Algorithm \ref{TitsAlgo}.
\begin{lemma}\label{X-Gamma-Lemma}
  Let $L \subset L'$ be a maximal overlattice and let 
  $\Pi = \la e_1, e_2 \ra$ be a totally isotropic plane.
  Then,
  \begin{enumerate}
  \item $\opn{SL}(2, \ZZ) \subset \opn{Stab}_{\old}(\Pi)$.
  \item If $L'(M) \subset L \subset L'$ for some $M>0$ and  $N$ is the  exponent of the group $L^{\vee}/ L'(M)$  then $\Gamma(N) \subset \opn{Stab}_{\otl}(\Pi)$.
  \item If $\hc$ is a  transversal for $\Gamma(N) \backslash \opn{SL}(2, \ZZ)$ then the set $\hc .e_1$ contains  representatives for all $\opn{Stab}_{\otl}(\Pi)$-orbits of isotropic lines in $\Pi$.
  \end{enumerate}
\end{lemma}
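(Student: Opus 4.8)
The plan is to prove the three assertions of Lemma~\ref{X-Gamma-Lemma} in turn, using the embedding of $\opn{SL}(2,\ZZ)\times\opn{SL}(2,\ZZ)$ into $\old$ constructed in Algorithm~\ref{EichlerAlg} together with the finite-index arguments already deployed in Lemma~\ref{fin-index-lemma}.

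For part (1), I would take $\Sc(\Pi)=\{x_i\}_{i=1}^{n+2}$ with $e_1=x_1$, $e_2=x_3$, so that $\la x_1,x_2,x_3,x_4\ra$ is the copy of $2U$ in the splitting $L'=2U\op L_0'$. Then the homomorphism $\theta$ of \eqref{SL2emb} restricts to either factor $\opn{SL}(2,\ZZ)\hookrightarrow\opn{O}^+(L')$, and one checks directly from the two explicit matrix forms of $\theta(Z,I)$ and $\theta(I,Z)$ that each such copy fixes $\Pi=\la x_1,x_3\ra$ setwise (indeed $\theta(Z,I)$ acts on $x_1,x_3$ by $Z$ and $\theta(I,Z)$ acts on them through the second factor). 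The relevant diagonal or "mixed" copy of $\opn{SL}(2,\ZZ)$ therefore lands in $\opn{Stab}_{\old}(\Pi)$; one also verifies the spinor norm is trivial, which follows since $\opn{SL}(2,\ZZ)$ is generated by unipotents and every element of $\widetilde{\opn{SO}}^+$ has trivial spinor norm, or more directly since $\theta$ factors through $\widetilde{\opn{SO}}^+(L')$ by the Eichler-transvection description in the proof of Algorithm~\ref{EichlerAlg}.

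For part (2), I would reuse the O'Grady-style argument from Lemma~\ref{fin-index-lemma}: with $L'(M)\subset L\subset L'$ and $N$ the exponent of $L^\vee/L'(M)$, a congruence condition modulo $N$ on an element of $\opn{SL}(2,\ZZ)\subset\opn{Stab}_{\old}(\Pi)$ forces that element to act trivially on $L'(M)/L'(M)$-type quotients and hence to preserve both $L$ and the discriminant form $D(L)$, so it lies in $\otl$; thus $\Gamma(N)\subset\opn{Stab}_{\otl}(\Pi)$. The point is that $\Gamma(N)$ is precisely the kernel of $\opn{SL}(2,\ZZ)\to\opn{SL}(2,\ZZ/N\ZZ)$ and reduction mod $N$ controls the action on $L^\vee/L$. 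For part (3), since by part (1) all of $\opn{SL}(2,\ZZ)$ stabilises $\Pi$ and acts on it as the standard action on $\ZZ^2\cong\Pi$, it acts transitively on primitive isotropic vectors up to sign, hence transitively on isotropic lines in $\Pi$; combining this with $\Gamma(N)\subset\opn{Stab}_{\otl}(\Pi)$ from part (2), a transversal $\hc$ for $\Gamma(N)\backslash\opn{SL}(2,\ZZ)$ supplies, via $\hc.e_1$, a set meeting every $\opn{Stab}_{\otl}(\Pi)$-orbit of lines in $\Pi$.

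The main obstacle I anticipate is part (2): one must be careful that the relevant copy of $\opn{SL}(2,\ZZ)$ inside $\opn{Stab}_{\old}(\Pi)$ is exactly the one whose mod-$N$ reduction governs membership in $\otl=\widetilde{\opn{O}}^+(L')$, and that $N$ (defined via $L^\vee/L'(M)$ rather than $L'/L'(M)$) is the right level — the dual lattice appears because preserving $D(L)=L^\vee/L$ pointwise, not merely preserving $L$, is what is needed. Verifying this amounts to tracking how $\theta(\Gamma(N),I)$ (or the diagonal copy) acts on $L'^\vee$, which is a direct computation with the matrices in Algorithm~\ref{EichlerAlg}, but getting the level exactly right is the delicate point; the transitivity statements in (1) and (3) are then essentially formal.
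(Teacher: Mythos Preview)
Your approach is essentially the same as the paper's: you use the embedding $\theta(Z,I)$ of $\opn{SL}(2,\ZZ)$ into $\opn{O}^+(L')$ from Algorithm~\ref{EichlerAlg} for part~(1), the congruence argument that $\Gamma(N)$ acts trivially on $L^{\vee}/L'(M)$ for part~(2), and transitivity of $\opn{SL}(2,\ZZ)$ on lines in $\Pi$ for part~(3). Two small corrections: in your final paragraph you write $\otl=\widetilde{\opn{O}}^+(L')$, but $\otl=\widetilde{\opn{O}}^+(L)$ (the stable orthogonal group of $L$, not $L'$); and there is no need for a ``diagonal or mixed'' copy of $\opn{SL}(2,\ZZ)$---the first factor $\theta(Z,I)$ already acts on $\Pi=\la x_1,x_3\ra$ as the standard $\opn{SL}(2,\ZZ)$-action, which is all that is required.
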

\begin{proof}
  We fix a basis $\Sc(\Pi)$ for $L'$.
  By \eqref{SL2emb}, the group $\opn{Stab}_{\opn{O}^+(L')}(\Pi)$ contains a copy of $\opn{SL}(2, \ZZ)$, acting as
  \begin{align*}
      \left(
  \begin{array}{c c  c c | c c c }
    d & 0 & c & 0 & & & \\
    0 & a & 0 & - b & &\textbf{\huge{$0$}} &\\
    b & 0 & a & 0 & & & \\
    0 & -c & 0 & d & & &\\
    \hline
    & &    &     &   & & \\
    & &  \textbf{\huge{$0$}}  &     & &  \textbf{\huge{$I$}}  &
  \end{array}
  \right) && \text{for} &&
  \bpm a & b \\ c & d \epm \in \opn{SL}(2, \ZZ)
  \end{align*}
and, by Lemma \ref{fin-index-lemma}, there exists $M$ such that
  \beq
  L'(M) \subset L \subset L' \subset L^{\vee}.
  \eeq
  Therefore, $\Gamma(N)$ acts trivially on $L^{\vee}/L'(M)$ and  $ \Gamma(N) \subset \otl$.
  The result follows by noting that $\opn{SL}(2, \ZZ)$ acts transitively on isotropic vectors in $\Pi$.  
\end{proof}
\subsection{Tits' buildings of enclosing groups}
If $G_1 \subset G_2$ then (as noted in \cite{abeliancompact} in connection with the moduli of abelian surfaces) $G_2$ acts naturally on $\bc(G_1)$.
If $\bc(G_1)$ is known, one can calculate $\bc(G_2)$ using Algorithm \ref{SuperGroupAlg}.
This can be helpful when working with groups of the form $\opn{O}_{\Ac}^+(L)$.
In such a case, one may be unable to apply Algorithm \ref{TitsAlgo} directly with $G_2=\opn{O}^+(L')$ for a split maximal lattice $L'$, however as $\otl \subset \opn{O}_{\Ac}^+(L)$ and $\otl \subset \opn{O}^+(L')$, one can calculate $\bc(\opn{O}_{\Ac}^+(L))$ using Algorithm \ref{MaximalAlgo} and \ref{SuperGroupAlg}.
\begin{algo}\label{SuperGroupAlg}
  Let
  $G_1 \subset G_2 \subset \opn{O}^+(L)$ and suppose $G_1 \subset \opn{O}^+(L')$ where $L \subset L'$ is an overlattice and both $\vert \opn{O}^+(L):G_1 \vert < \infty$ and
  $\vert \opn{O}^+(L'):G_1\vert < \infty$.
  Let $\bc(G_1) = (\nc, \ec)$ where $\nc = \pc \sqcup \cc$ and let $[E]$ denote the $G_1$-equivalence class of an isotropic subspace $E \subset L \otimes \QQ = L' \otimes \QQ$. 
Then $\bc(G_2)$ can be calculated from $\bc(G_1)$ by identifying nodes as follows.
  \begin{enumerate}
  \item \textbf{Let} $\hc$ be a transversal of $G_1 \backslash G_2$.
  \item \textbf{For} $\ic \in \{\pc, \cc\}$ 
      \begin{enumerate}
      \item[] \textbf{For} $h \in \hc$ and $([E_1], [E_2]) \in \ic \times \ic$
        \begin{enumerate}
        \item[] \textbf{If} there exists $\hat{g} \in \opn{O}^+(L')$ such that $\hat{g}(hE_1) = E_2$ \textbf{then} 
          \begin{enumerate}
          \item[] \textbf{Let} $\jc$ be a transversal for $\opn{Stab}_{\opn{O}^+(L')}(h E_1) \backslash \opn{Stab}_{G_1}(h E_1)$.
          \item[] \textbf{If} $\hat{g} \jc \cap G_1 \neq \emptyset$ \textbf{then} identify $[E_1]$ and $[E_2]$.
          \end{enumerate}
        \end{enumerate}
      \end{enumerate}
  \end{enumerate}
\end{algo}
\begin{proof}
  The classes $[E_1], [E_2] \in \pc$ or $\cc$ are equivalent under $G_2$ if and only if $ghE_1 = E_2$ for some $g \in G_1$ and $h \in \hc$.
  In such a case, there exists $x \in \opn{O}^+(L')$ such that $x:hE_1 \rightarrow E_2$.
  The set
  \beq
  X=\left \{ x \in \opn{O}^+(L') \mid x(hE_1) = E_2 \right \}
  =
  \hat{g} \opn{Stab}_{\opn{O}^+(L')}(hE_1)
  \eeq
  and $X \cap G_1 \neq \emptyset$ if and only if $\hat{g} \jc \cap G_1 \neq \emptyset$, from which the result follows.
\end{proof}
\noindent \textbf{Remark} We note that if $L'$ is a split maximal lattice, the map $\hat{g}$ in Algorithm \ref{SuperGroupAlg} can be calculated using \eqref{maximalsplitting} and Algorithm \ref{EichlerAlg}.

 \section{Examples}\label{ExampleSection}
We now calculate $\bc(\otl)$ for
$L=2U \op A_2$,
$L=2U \op \la -6 \ra \op \la -2 \ra$ and
$L=2 U(2) \op A_2$.
We will use a computer for lengthier parts of the calculations: our code (written for the \ttt{Sage} computer algebra system) can be found at \cite{boundary}.
Throughout, we will use $\{e_i, f_i \}$ for $i=1,2$ to denote canonical bases for the two copies of $U \subset L$.
\subsection{$L=2U \op A_2$}
Let $L=2U \op A_2$. 
We begin by calculating generators for $\opn{O}^+(L_1)$, where $L_1=U \op A_2$.
 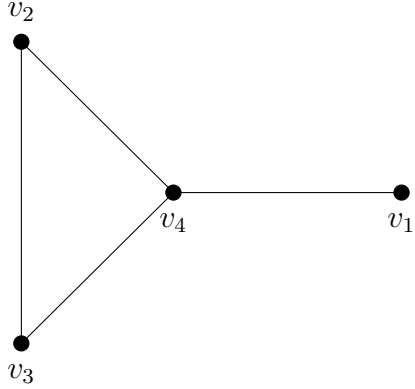
\begin{figure}[h!]
    \centering
    \begin{tikzpicture}
      \filldraw
      (0,2)  circle (0.1) 
      (0,-2) circle (0.1) 
      (2,-0) circle (0.1) 
      (5,-0) circle (0.1); 
      \draw
      (0,2.4)  node[align=center]{$v_2$}
      (0,-2.4) node[align=center]{$v_3$}
      (2,-0.4) node[align=center]{$v_4$}
      (5,-0.4) node[align=center]{$v_1$};
      \draw
      (0,2)--(0,-2)--(2,0)--cycle;
      \draw
      (2,0)--(5,0);
    \end{tikzpicture}
    \caption{A fundamental domain for $W(U \op A_2)$.}\label{U+A2diagram}
  \end{figure}
\begin{example}\label{O+(U+A2)}
  If $\{e_i\}_{i=1}^4$ is a $\ZZ$-basis for $L_1$ with Gram matrix
  \beq
  ((e_i, e_j))=
  \bpm
  0 & 1 & 0 & 0 \\
  1 & 0 & 0 & 0 \\
  0 & 0 & -2 & -1 \\
  0 & 0 & -1 & -2
  \epm
  \eeq
  then $\opn{O}^+(L_1)$ is generated by the reflections
  $\{\sigma_{v_i}\}_{i=1}^4$
  and the map
  \begin{equation}\label{diagaut}
  U \op A_2 \ni (x_1, x_2, x_3, x_4) \mapsto (x_1, x_2, -x_4, -x_3) \in U \op A_2,
  \end{equation}
  where
  $v_1 = (1,-1,0,0)$, 
  $v_2 = (0,0,-1,0)$, 
  $v_3 = (0,0,0,1)$ and 
  $v_4 = (0,1,1,-1)$.
\end{example}
\begin{proof}
  We begin by applying Vinberg's algorithm \cite{Vinberg} to obtain generators $\{\sigma_{v_i} \}$ for the reflection subgroup $W(L_1) \subset \opn{O}^+(L_1)$.
  Let $\HH(L_1)$ be a component of $\{[x] \in \PP(L_1 \otimes \CC) \mid (x,x)<0 \}$ and, for fixed $x_0 \in \HH(L_1)$, let $H(v_i)^-$ denote the half-space of $\HH(L_1)$ orthogonal to $v_i \in L_1$ containing $x_0$.
  Vinberg's algorithm proceeds by selecting a point $x_0 \in \HH(L_1)$ whose stabiliser is generated by reflections $\{\sigma_{v_i} \}_{i=1}^k$ and proceeds by adding additional reflections $\sigma_{v_j}$, one-by-one for $j>k$, such that  
\begin{equation}\label{vinquant}
  \frac{\vert (x_0, v_j) \vert^2}{\vert (v_j, v_j) \vert}
\end{equation}
is minimal and $H(v_i)$ and $H(v_j)$ are opposite for all $i<j$ \cite[p.327]{Vinberg}.
The algorithm terminates at $j=m$ if the polyhedron $P^{(m)}$, defined by
\begin{equation*}
  P^{(m)} = \bigcap_{i=1}^m H(v_i)^-,
\end{equation*}
has finite volume (for which sufficient criteria are given in \cite{Vinberg}).

If $x_0 := e_1 + e_2$ then 
$x_0^{\perp} = \la e_1 - e_2, e_3, e_4 \ra \subset U \op A_2(-1)$
and  $x_0^{\perp} \cong \la -2 \ra \op A_2(-1)$.
Therefore, $\opn{Stab}_{W(L_1)}(x_0) = \la \sigma_{v_1}, \sigma_{v_2}, \sigma_{v_3} \ra$ where
$v_1 = e_1 - e_2$,
$v_2 = -e_3$
and $v_3 = e_4$.
If $v \in U \op A_2(-1)$ and $\sigma_v \in \opn{O}(L_1)$ then $v^2 \mid 2 \opn{div}(v)$.
As $\opn{div}(v)$ is the order of $v^*$ in $D(L_1)$ then $\opn{div}(v) = 1$ or $3$ and 
\eqref{vinquant} is minimised for $v^2 = -2$ or $-6$, respectively.
If $v_4 := e_2 + e_3 - e_4$ then $H(v_4)^-$ is opposite to $H(v_i)^-$ for all $i<4$ and $P^{(4)}$ is non-degenerate with finite volume.
The Coxeter diagram of $P^{(4)}$ (which was also obtained in  \cite{VinbergReflection}) is given in Figure \ref{U+A2diagram}.
The result follows by noting that the group of diagram automorphisms $D$ of $P$ is generated by \eqref{diagaut} and $\opn{O}(L) = W(\opn{O}(L)) \rtimes D$ \cite{Vinberg}. 
\end{proof}

\begin{example}
  The building $\bc(\otl)$ is given by Figure \ref{stable_2U_A2_tits}.
\end{example}
\begin{proof}
  We apply Algorithm \ref{TitsAlgo} with $G_1=\otl$ and $G_2=\opn{O}^+(L)$.
  The lattice $L$ is maximal and $A_2$ is unique in its genus \cite[Table 15.1 p.360]{SPLAG}.
  Therefore, $\bc(G_1)$ is as in Figure \ref{MaximalBuilding}, with $m=0$ and nodes $\pc:=\{ [\ell] \}$ and $\cc:=\{ [\Pi] \}$,
  where $\ell = \la e_1 \ra$ and $\Pi = \la e_1, e_2 \ra$.
  Using the generators $\ic$ of Lemma \ref{2ULgensLemma} and Example \ref{O+(U+A2)}, we calculate $\gc=G_2 / G_1$.
  We begin with an initial set $\gc := \{e \}$, which we enlarge by adding $x \in \gc. \ic$ to $\gc$ if there exists no $y \in \gc$ such that $y^{-1}x \in G_1$, and stop when $\gc$ can be enlarged no further.
  We find $\gc = \{g_1, g_2 \}$, where 
\beq
  \begin{array}{ccc}
    g_1:=\bpm
    1 & 0 & 0 & 0 & 0 & 0 \\
    0 & 1 & 0 & 0 & 0 & 0 \\
    0 & 0 & 1 & 0 & 0 & 0 \\
    0 & 0 & 0 & 1 & 0 & 0 \\
    0 & 0 & 0 & 0 & 1 & 0 \\
    0 & 0 & 0 & 0 & 0 & 1
    \epm
    & \text{and} &
    g_2:= \bpm
    1 & 0 & 0 & 0 & 0 & 0 \\
    0 & 1 & 0 & 0 & 0 & 0 \\
    0 & 0 & 1 & 0 & 0 & 0 \\
    0 & 0 & 0 & 1 & 0 & 0 \\
    0 & 0 & 0 & 0 & 0 & -1 \\
    0 & 0 & 0 & 0 & -1 & 0
    \epm.
  \end{array}
\eeq
Similarly (using the generators of Lemma \ref{ParaDecompLemma} and \ref{stab-E-o(L')-gens}) we find that $\{g_1, g_2 \}$ are also representatives for 
  $\opn{Stab}_{G_1}(g_i \ell) \backslash \opn{Stab}_{G_2}(g_i \ell)$
  and
  $\opn{Stab}_{G_1}(g_i \Pi) \backslash \opn{Stab}_{G_2}(g_i \Pi)$, 
implying $\nc_1 = (\{[\ell] \}, \{[\Pi]\})$.
  By Lemma \ref{X-Gamma-Lemma}, we let $X=\{\ell \}$ and $\jc = \opn{SL}(2, \ZZ)/\Gamma(3)$, and as $\ell \in Y:=G.X$ we add the edge $l$---$\Pi$ to $\ec_1$. 
\end{proof}
The calculation of $\bc(\otl)$ using \cite{boundary} is fast, taking less than a second on a desktop computer.
\begin{figure}
  \centering
  \begin{tikzpicture}
    \draw
    (-3,0)--(3,0)
    (-3, -0.1) node[below]{$0$}
(3, -0.1) node[below]{$0$};
    
    \filldraw[fill=white]
    (-3,0) circle (0.1);

    \filldraw[fill=black]
    (3,0) circle  (0.1);
  \end{tikzpicture}
  \caption{The building $\bc(\widetilde{\opn{O}}^+(2U \op A_2))$}\label{stable_2U_A2_tits}
\end{figure}
\subsection{$L=2U \op \la -6 \ra \op \la - 2 \ra$}
We now consider $L=2U \op \la -6 \ra \op \la - 2 \ra$, which arises in connection with period spaces of deformation generalised Kummer varieties \cite{DawesFamily}.
\begin{example}\label{MaximalLatticeExample}
  There exists a maximal overlattice $L \subset L'$.
\end{example}
\begin{proof}
  The discriminant form of $L$ is given by
  \beq
  q_L(a,b,c) =
  -\frac{a^2}{2}
  -\frac{3b^2}{2}
  -\frac{2c^2}{3} \bmod{2 \ZZ}
  \eeq
  where $(a,b,c) \in C_2^{\op 2} \op C_3 \cong D(L)$.
  The isotropic elements of $D(L)$ are given by $(1,1,0)$ and $(0,0,0)$.
  By Lemma \ref{maximal-overlattice-lemma},
  if $H := \la (1,1,0) \ra$ then $H^{\perp} = \la (1,1,0), (0,0,1) \ra$ and $H^{\perp}/H \cong q_{A_2}$.
  The overlattice can be realised by the map 
  \beq
  2U \op \la -6 \ra \op \la -2 \ra \ni
  (x_1, x_2, x_3, x_4, x_5, x_6) \mapsto
  (x_1, x_2, x_3, x_4, -2 x_5, x_6 + x_5)
  \in 2U \op A_2.
  \eeq
\end{proof}
\begin{example}
  The buildings $\bc(\widetilde{\opn{O}}^+(L))$ and $\bc(\opn{O}^+(L))$ are given by Figure \ref{gkbuilding}.
\end{example}
\begin{proof}
  We calculate $\bc(\widetilde{\opn{O}}^+(L))$ using \cite{boundary}, which applies Algorithm \ref{TitsAlgo} and Example \ref{MaximalLatticeExample}.
If $\ell = \la e_1 \ra$ and $\Pi=\la e_1, e_2 \ra$ then $\pc = \{g_1 \ell, g_2 \ell \}$ and $\cc = \{g_1 \Pi, g_2 \Pi \}$, where
\begin{align*}
  g_1 =
  \bpm
  1 & 0 & 0 & 0 & 0 & 0 \\
  0 & 1 & 0 & 0 & 0 & 0 \\
  0 & 0 & 1 & 0 & 0 & 0 \\
  0 & 0 & 0 & 1 & 0 & 0 \\
  0 & 0 & 0 & 0 & 1 & 0 \\
  0 & 0 & 0 & 0 & 0 & 1 
  \epm
  && \text{and} &&
  g_2 =
  \bpm
  1 & 0 & 0 & 0 & 0 & 0 \\
  1 & 1 & 0 & 0 & 2 & 1 \\
  0 & 0 & 1 & 0 & 0 & 0 \\
  0 & 0 & 0 & 1 & 0 & 0 \\
  1 & 0 & 0 & 0 & 1 & 0 \\
  0 & 0 & 0 & 0 & 0 & 1
  \epm.
\end{align*}
  As shown in \cite[Lemma 2.4]{DawesFamily}, $\opn{O}^+(L) = \la \widetilde{\opn{O}}^+(L), \sigma_v \ra$ where $v$ generates the $\la -6 \ra$ factor of $L$.
  We therefore apply Algorithm \ref{SuperGroupAlg} with $H = \{I, \sigma_v \}$.
  For  $\ic = \pc$, if $h = I$ we calculate a transversal $\{r_k\}$ for $\opn{Stab}_{\otl}(g_2 \ell) \backslash \opn{Stab}_{\ol}(g_2 \ell)$.
  The element $\hat{g}=g_2$ and we find  no $r_k \hat{g}^{-1} \in G_2$. 
  Similarly, for $h=\sigma_v$, we have $\hat{g}=g_2$ and the same conclusion follows.
  Therefore, $[g_1 \ell]$ and $[g_2 \ell]$ are not identified under $\opn{O}^+(L)$.
  One proceeds identically for $\ic = \Cc$.
\end{proof}
\begin{figure}
  \centering
  \begin{tikzpicture}
    \draw
    (-6,0)--(6,0)
    (-6, -0.1) node[below]{$0$}
    (-2, -0.1) node[below]{$0$}
    (2, -0.1) node[below]{$1$}
    (6, -0.1) node[below]{$1$};

    \filldraw[fill=white]
    (-6,0) circle (0.1)
    (2,0) circle (0.1);
    \filldraw[fill=black]
    (6,0) circle (0.1)    
    (-2,0) circle (0.1);

  \end{tikzpicture}
  \caption{The building $\bc(\opn{O}^+(2U \op \la - 2 \ra \op \la -6 \ra))$}\label{gkbuilding}
\end{figure}
\noindent \textbf{Remark}
Figure \ref{gkbuilding} was also obtained using different methods in \cite{DawesFamily}. 
\subsection{$2U(2) \op A_2$}
For a more intricate example, we consider $L=2U(2) \op A_2$.
\begin{figure}[h!]
  \centering
  \begin{tikzpicture}
    \draw circle (4cm);
    
    \path (3*360/12:4cm) edge [bend right=30] (9*360/12:4cm);	
    \path (11*360/12:4cm) edge [bend right=30] (5*360/12:4cm);	
    \path (7*360/12:4cm) edge [bend right=30] (1*360/12:4cm);
    
    \draw[black, fill=black] (2*360/12:1.17cm) circle (0.1cm);
    \node at (2*360/12:1.47cm) {3};
    \draw[black, fill=black] (6*360/12:1.17cm) circle (0.1cm);
    \node at (6*360/12:1.47cm) {1};
    \draw[black, fill=black] (10*360/12:1.17cm) circle (0.1cm);
    \node at (10*360/12:1.47cm) {6};
    
    \draw[black, fill=black] (0:4cm) circle (0.1cm);
    \draw[black, fill=black] (2*360/12:4cm) circle (0.1cm);
    \draw[black, fill=black] (4*360/12:4cm) circle (0.1cm);
    \draw[black, fill=black] (6*360/12:4cm) circle (0.1cm);
    \draw[black, fill=black] (8*360/12:4cm) circle (0.1cm);  
    \draw[black, fill=black] (10*360/12:4cm) circle (0.1cm);  
    
    \node at (0*360/12:4.3cm) {8};
    \node at (2*360/12:4.3cm) {7};
    \node at (4*360/12:4.3cm) {5};
    \node at (6*360/12:4.3cm) {2};
    \node at (8*360/12:4.3cm) {0};
    \node at (10*360/12:4.3cm) {4};
    
    \draw[black, fill=white] (1*360/12:4cm) circle (0.1cm);
    \draw[black, fill=white] (3*360/12:4cm) circle (0.1cm);
    \draw[black, fill=white] (5*360/12:4cm) circle (0.1cm);
    \draw[black, fill=white] (7*360/12:4cm) circle (0.1cm);
    \draw[black, fill=white] (9*360/12:4cm) circle (0.1cm);
    \draw[black, fill=white] (11*360/12:4cm) circle (0.1cm);
    
    \node at (1*360/12:4.3cm) {5};
    \node at (3*360/12:4.3cm) {4};
    \node at (5*360/12:4.3cm) {1};
    \node at (7*360/12:4.3cm) {3};
    \node at (9*360/12:4.3cm) {0};
    \node at (11*360/12:4.3cm) {2};
    
  \end{tikzpicture}
  \caption{$\bc(\widetilde{\opn{O}}^+(2U(2) \op A_2))$}\label{2u2a2building}
\end{figure}
\begin{example}
  The building $\bc(\otl)$ is given in Figure \ref{2u2a2building}.
  \end{example}
We calculated $\bc(\otl)$ using \cite{boundary}, applying Algorithm \ref{TitsAlgo} for $G_2 = \opn{O}^+(2U \op A_2)$ and $G_1 = \otl$.
As the index $\vert G_2:G_1 \vert = 1080$ is quite large, the calculation was rather lengthy, taking a few hours on a desktop computer.
A significant  improvement in performance should be possible by making use of the subgroups
\beq
\widetilde{\opn{O}}^+(2U(2) \op A_2)
\subset
\widetilde{\opn{O}}^+(U \op U(2) \op A_2)
\subset
\widetilde{\opn{O}}^+(2U  \op A_2)
\subset
\opn{O}^+(2U \op A_2),
\eeq
which are of index
20, 27 and 2, respectively.
 \small{
\bibliographystyle{amsalpha}
\bibliography{bib}{}
}
\bigskip
\noindent Matthew Dawes\\
School of Mathematics\\
Cardiff University\\
Senghennydd Road\\
Cardiff\\
CF24 4AG\\
United Kingdom\\
\\
\noindent \ttt{dawesm1@cardiff.ac.uk}
\end{document}